\newtheorem{Theorem}{Theorem}
\newtheorem{Lemma}{Lemma}
\newtheorem{Claim}{Claim}
\newtheorem{Proposition}{Proposition}
\newcommand{\X}{{\sf X} } 
\newcommand{\Y}{{\sf Y} } 
\newcommand{\Z}{{\sf Z} } 
\begin{document}

\title{The $(2,k)$-connectivity augmentation problem:\\
Algorithmic aspects}


\author{Florian H\"orsch*    \and
        Zolt\'an Szigeti 
}


\institute{Florian H\"orsch \at
              46 Avenue F\'elix Viallet\\
		Grenoble, France\\
              \email{florian.hoersch@grenoble-inp.fr}\\
		Corresponding author           
           \and
          Zolt\'an Szigeti \at
              46 Avenue F\'elix Viallet\\
		Grenoble, France\\
              \email{zoltan.szigeti@grenoble-inp.fr}
}



\maketitle

\begin{abstract}
Durand de Gevigney and Szigeti \cite{DgGSz} have recently given a min-max theorem for the $(2,k)$-connectivity augmentation problem.
This article provides an $O(n^3(m+ n \textrm{ }\log\textrm{ }n))$ time algorithm to find an optimal solution for this problem.
\keywords{Connectivity augmentation}
\end{abstract}

\section{Introduction}

Let {\boldmath$G=(V,E)$ be an undirected graph with $n$ vertices and $m$ edges and let $c$} $:E\rightarrow \mathbb{Z}_{> 0}$ be an integer edge capacity function. For $X \subseteq V$, we denote the set of edges with exactly one endvertex in $X$ by {\boldmath $\delta_G(X)$}. We say that $G$ is {\it $k$-edge-connected} if $\sum_{e \in \delta_G(X)}c(e)\geq k$ for all nonempty, proper $ X\subset V.$ 
\smallskip

For all problems treated in this article, the version with capacities and the version without capacities can be easily reduced to each other by replacing an edge with a capacity by multiple edges and vice-versa. Yet, this does not mean algorithmic equivalence. All the running times we give hold for the case with edge capacities, assuming that all basic operations can be executed in constant time. For this reason, all technical statements starting from Section \ref{pre} will be given in the capacitated form. During the introduction, however, we first describe the problems in the uncapacitated form for the sake of simplicity.
\smallskip
 
The theory of graph connectivity augmentation has seen significant progress during the last decades. 
The basic problem, the {\it global edge-connectivity augmentation} of undirected graphs, can be defined as follows:
Given an undirected graph $G$ and a positive integer $k$, find a set of edges of minimum cardinality whose addition results in a $k$-edge-connected graph.
The problem was solved in terms of a min-max theorem by Cai and Sun \cite {CS} and a polynomial time algorithm was provided by Watanabe and Nakamura \cite{WN}. If we replace the minimum cardinality condition in the above problem by a minimum cost condition with respect to a given arbitrary cost function on $V^2$, the problem becomes $NP$-complete as proven by Eswaran and Tarjan in \cite{ET}. For this reason, we only consider the minimum cardinality version for all augmentation problems treated in this article.
\smallskip

The method that is nowadays most commonly used when dealing with connectivity augmentation problems was introduced by Frank \cite{F}. This method consists of two steps. In the first one a new vertex as well as edges connecting it to the given graph are added to have the required connectivity condition. Such a graph with the minimum even number of new edges is called a {\it minimal even extension}. The second step applies an operation called splitting off. A splitting off is the deletion of two edges incident to the new vertex and the addition of an edge between the other endvertices of these edges. A minimum augmentation of $G$ can be obtained by repeatedly applying splitting offs maintaining the connectivity requirements and finally deleting the added vertex.  
\smallskip

Besides the basic case, this method allows us to handle several other versions of the problem such as the local edge-connectivity augmentation problem in undirected graphs and the global arc-connectivity augmentation problem for directed graphs. Frank \cite{F} managed to provide both min-max theorems and efficient algorithms for these problems. Several further applications have been found, see for example \cite{BAB} -- \cite{BGS2} and \cite{S}.
\smallskip

Vertex-connectivity augmentation problems are more complicated than edge-connectivity augmentation problems. For the global vertex-connectivity augmentation problem in undirected graphs, no  min-max theorem is known, however, a polynomial time algorithm for  constant $k$ was given by Jackson and Jord\'an \cite{JJ}. For the global vertex-connectivity augmentation problem in directed graphs, a min-max theorem and an efficient algorithm for constant $k$ were given by Frank and Jord\'an \cite{FJ}. The main contribution of \cite{FJ}, besides the solution of the directed global vertex-connectivity augmentation problem, is the first application of bisets, a technique we also rely on in the present article.
\smallskip

Returning to the basic case of global undirected edge-connectivity augmentation, Frank \cite{F} provided an $O(n^5)$ time algorithm based on the approach described above. In order to achieve this running time, he uses a slightly involved method for the splitting off part. Carefully choosing the pairs of edges to be split off, he manages to finish with a complete splitting off after a linear number of splitting off operations in $n$ while the obvious approach results in a quadratic number. Nagamochi and Ibaraki \cite{NI} provided a more efficient implementation of the method described above. They managed to find a minimum $k$-edge-connected augmentation in $O((n \textrm{ }\log\textrm{ } n) (m+n \textrm{ }\log\textrm{ } n))$ time. An important ingredient in their work is an $O(n(m+ n \textrm{ }\log\textrm{ } n))$ time mincut algorithm. We also make use of this mincut algorithm as a subroutine.
\smallskip

The problem we deal with in this article is $(2,k)$-connectivity augmentation. The concept of $(2,k)$-connectivity is a mixed form of edge-connectivity and vertex-connectivity and was first introduced in a more general form by Kaneko and Ota \cite{KO}.
A graph $G=(V,E)$ is said to be $(2,k)$-connected if $|V|\ge 3$, $G$ is $2k$-edge-connected and $G-v$ is $k$-edge-connected for every vertex $v\in V.$ Bisets provide a convenient framework to treat this mixed-connectivity concept. 
The $(2,k)$-connectivity augmentation problem was considered by Durand de Gevigney and Szigeti \cite{DgGSz}, where a min-max theorem was proved.  
The authors characterized graphs admitting a complete splitting off maintaining $(2,k)$-connectivity in terms of an obstacle. They also worked out the minimal even extension step for $(2,k)$-connectivity. They showed that a minimal even extension for $(2,k)$-connectivity exists so that the obtained graph contains no obstacle yielding a complete admissible splitting off. This allowed the application of Frank's method  to derive a min-max theorem.
While a careful analysis of their proof yields a rather slow polynomial time algorithm for the uncapacitated version, no explicit mention of an algorithm is made in \cite{DgGSz}. 
\smallskip

The aim of the present article is to give an explicit algorithm of significantly improved running time that works for the more general  capacitated  version. Given a graph $G=(V,E)$ and $k\geq 2$, a {\it minimum $(2,k)$-connected augmentation} of $G$ is a $(2,k)$-connected graph $G'=(V,E')$ such that $E \subseteq E'$ and $|E'|$ is minimum. The main result of this article is the following:

\begin{Theorem}
Given a graph $G$ and an integer $k \geq 2$, we can compute a minimum $(2,k)$-connected augmentation of $G$ in $O(n^3(m+ n \textrm{ }\log\textrm{ } n))$ time.
\end{Theorem}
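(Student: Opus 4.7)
The plan is to adapt Frank's splitting off method \cite{F} to the $(2,k)$-connectivity setting, relying on the structural guarantees proved by Durand de Gevigney and Szigeti \cite{DgGSz}. The algorithm proceeds in two phases. In the \emph{extension phase} we add a new vertex $s$ together with a minimum even number of edges between $s$ and $V$ so that the enlarged graph is $(2,k)$-connected on $V$ and does not contain the obstacle from \cite{DgGSz} that would prevent a complete splitting off at $s$. In the \emph{splitting phase} we iteratively choose a pair of edges $\{su,sv\}$ incident to $s$ whose splitting off (i.e.\ replacement by the edge $uv$) preserves $(2,k)$-connectivity on $V$, until all edges at $s$ are used; deleting $s$ then yields an optimum $(2,k)$-connected augmentation.

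The extension phase can be handled by first computing the $(2,k)$-deficiency of $G$ at each relevant biset through a bounded number of global mincut computations on $G$ and on the vertex-deleted graphs $G-v$, and then, whenever the resulting extension contains an obstacle, repairing it by a local rerouting of edges at $s$ based on the structural description of \cite{DgGSz}. Using the Nagamochi-Ibaraki mincut algorithm \cite{NI}, which runs in $O(n(m+n\log n))$ time, this phase fits comfortably inside the overall $O(n^3(m+n\log n))$ budget. The dominant cost arises in the splitting phase: admissibility of a candidate pair at $s$ reduces to a constant number of biset mincut computations which, via appropriate reductions to the Nagamochi-Ibaraki routine, each take $O(n(m+n\log n))$ time. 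Using a careful pair-selection rule that, after each unsuccessful probe, confines the next candidate to a structurally restricted part of the link of $s$, I would bound the total number of probes across the entire phase by $O(n^2)$, yielding the claimed overall running time.

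The main obstacle will be the algorithmic implementation of the biset framework. In Frank's original edge-connectivity algorithm \cite{F} each admissibility test corresponds to a single mincut, but here each test has to control both global $2k$-edge-cuts and $k$-edge-cuts in vertex-deleted subgraphs simultaneously, and the ``dangerous'' objects live in the richer lattice of bisets. Making the obstacle detection and repair of \cite{DgGSz} constructive and efficient during the extension phase, and designing a pair-selection rule during the splitting phase that keeps the amortized number of probes per edge at $s$ linear, are the two delicate points that have to be settled to reach the stated time bound; everything else is a matter of combining the existence theorems of \cite{DgGSz} with the mincut subroutine of \cite{NI}.
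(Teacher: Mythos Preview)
Your running-time arithmetic does not add up, and the gap is exactly the place where the paper has to work hardest. You assert that ``admissibility of a candidate pair at $s$ reduces to a constant number of biset mincut computations'' each costing $O(n(m+n\log n))$. This is not correct: to test whether a splitting off preserves $(2,k)$-connectivity in $V$ you must verify $2k$-edge-connectivity of $H$ in $V$ \emph{and} $k$-edge-connectivity of $H-x$ in $V-x$ for \emph{every} $x\in V$. That is $\Theta(n)$ calls to the Nagamochi--Ibaraki routine, so a single admissibility test (or maximal splitting off) costs $O(n^2(m+n\log n))$, as established in Lemma~\ref{bigsplit}. Combined with your $O(n^2)$ bound on the number of probes, this only yields the naive $O(n^4(m+n\log n))$ algorithm (this is precisely Algorithm~\ref{algox}), not the claimed $O(n^3(m+n\log n))$.

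To reach $O(n^3(m+n\log n))$ the paper does \emph{not} cheapen the individual test; instead it reduces the number of maximal splitting offs to $O(n)$. The mechanism is to maintain, across iterations, a single horrifying biset ${\sf X}$ that certifies nonadmissibility, and to always pick the next pair with one endpoint inside $X_I$ and one outside $X_O$. The crux is the key Lemma~\ref{vide}: after at most two unsuccessful maximal splitting offs, the blocking bisets returned can be merged into ${\sf X}$ so that $|N_H(s)-X_O|$ strictly drops, while any successful attempt drops $|N_H(s)|$. The potential $M=|N_H(s)|+|N_H(s)-X_O|$ then bounds the number of iterations by $4n$. Your proposal's ``careful pair-selection rule'' gestures in this direction but neither identifies the invariant nor the merging lemma that makes a linear bound possible; without them you are stuck at $O(n^2)$ iterations and hence $O(n^4(m+n\log n))$ overall. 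A secondary point: your plan to detect and repair obstacles during the extension phase is unnecessary. Lemma~\ref{antiobstacle} shows that once you start from a minimal even extension (Algorithm~\ref{algo1}), no obstacle can ever appear during any sequence of admissible splitting offs, so obstacle handling can be dropped entirely.
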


\smallskip
During the whole article we suppose that $k \geq 2$. For $k=1$, observe that $(2,1)$-connectivity is equivalent to $2$-vertex-connectivity and an $O(n+m)$ time algorithm for $2$-vertex-connectivity augmentation has been given in \cite{ET} by Eswaran and Tarjan. We follow the proof technique of \cite{DgGSz} and show how to turn it  into an efficient algorithm with some more effort. An efficient algorithm for finding a minimal even extension for $(2,k)$-connectivity comes straight from its definition. Therefore, the main difficulty is to find the complete admissible splitting off yielding a $(2,k)$-connected augmentation of $G$ whose existence is guaranteed by \cite{DgGSz}. In comparison to the problems in \cite{F}, we seem to face another difficulty; graphs containing an obstacle need to be avoided during the splitting off process. We show that no obstacle can ever be created when splitting offs are executed from a minimal even extension for $(2,k)$-connectivity, so the aforementioned difficulty does not actually exist. We first use the mincut algorithm of \cite{NI} to compute the best splitting off for a given pair of  edges incident to the new vertex in $O(n^2(m+n \textrm{ }\log\textrm{ } n))$ time. This yields a rather simple algorithm for the $(2,k)$-connectivity augmentation problem that runs in $O(n^4(m+ n \textrm{ }\log\textrm{ }n))$ time. In the last part, we provide a more efficient algorithm for the splitting off part. In order to speed up the previous algorithm, we choose the splitting offs in a way that linearizes the number of required splitting offs. The overall running time of our algorithm for $(2,k)$-connectivity augmentation is $O(n^3(m+ n \textrm{ }\log\textrm{ }n))$.\\

This algorithm is inspired by the method used by Frank in \cite{F} for the case of $k$-edge-connectivity. However, there is a siginificant difference between our method and the method used by Frank. While Frank tries to execute several splitting offs all of which contain one common edge, we try to split off all pairs of edges included in a well-chosen triple of edges. This allows us to avoid a significant amount of technicalities in comparison to an earlier version of this article.  
\smallskip

This article is organized as follows: In Section \ref{pre}, we give an overview of the results we use and show how to turn them into subroutines for our algorithm. In Section \ref{ext}, we show how to efficiently compute a minimal even extension for $(2,k)$-connectivity and give some of its important properties. We show how to turn this into a simple algorithm that runs in $O(n^4(m+ n \textrm{ }\log\textrm{ }n))$ time. In Section \ref{algo}, we present a more involved algorithm that runs in $O(n^3(m+ n \textrm{ }\log\textrm{ }n))$ time.

\section{Basic definitions and previous results}\label{pre}

\subsection{Capacitated graphs}

All the graphs considered are undirected and loopless. On the other hand, our graphs have nonnegative integer edge capacities. Let $(G=(V,E),c)$ be a capacitated graph, i.e. a graph with an integer edge capacity function $c:E\rightarrow \mathbb{Z}_{> 0}$. Given a set $F \subseteq E$ of edges, we will use {\boldmath $c(F)$} for $\sum_{e \in F}c(e)$. Observe that we require all our capacitated graphs to have integer edge capacities, so we shall not create any capacitated graphs with noninteger edge capacities during our algorithm. We also do not consider capacitated graphs with multiple edges as they can easily be replaced by a single edge with the sum of the corresponding capacities. For our algorithms, we will assume that additions and subtractions of real numbers can be executed in constant time.

 For  $X,Y\subseteq V$, we use {\boldmath $\delta_G(X,Y)$} to denote the set of edges between $X-Y$ and $Y-X$. We use {\boldmath $\overline{\delta}_G(X,Y)$} for $\delta_G(X\cap Y,V-(X \cup Y))$.
For a vertex $v \in V$, we use {\boldmath$\delta_G(v)$} for $\delta_G(\{v\})$ and {\boldmath$N_G(v)$} for the set of neighbors of $v$.

We require the following well-known result that can be found in Proposition 1.2.1 in \cite{book}.

\begin{Proposition}\label{ftyig} Let $(G=(V,E),c)$ be a capacitated graph. For all $X,Y\subseteq V,$ the following hold:
	\begin{enumerate}[(a)]
		\item $c(\delta_G(X))+c(\delta_G(Y))=c(\delta_G(X\cap Y))+c(\delta_G(X\cup Y))+2c(\delta_G(X,Y))$,
		\item $c(\delta_G(X))+c(\delta_G(Y))=c(\delta_G(X-Y))+c(\delta_G(X-Y))+2c(\overline{\delta}_G(X,Y))$.
	\end{enumerate}
\end{Proposition}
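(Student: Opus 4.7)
The plan is to reduce both identities to a routine edge-by-edge verification. I would partition $V$ into the four regions $A = X \cap Y$, $B = X \setminus Y$, $C = Y \setminus X$, and $D = V \setminus (X \cup Y)$. Every term appearing in (a) or (b) has the form $\sum_{e \in E} c(e)\,\mathbf{1}_P(e)$, where $P$ is a property depending only on which two of $A, B, C, D$ contain the endpoints of $e$. Hence both identities factor through the edges, and it suffices to prove them one edge at a time, i.e.\ to check that for every edge $e = uv$ the indicators on the two sides coincide.

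For each edge there are ten possible configurations for the pair consisting of the classes of $u$ and $v$: four with both endpoints in the same region, which contribute $0$ to every term and are therefore trivial, and six crossing configurations $AB, AC, AD, BC, BD, CD$. For each crossing configuration I would simply tabulate the contribution to every term of the identity. For (a) the only cases that require any thought are the ``long-range'' ones: an edge between $B$ and $C$ contributes $1+1$ on the left (it leaves both $X$ and $Y$) and $0+0+2\cdot 1$ on the right (it lies in $\delta_G(X,Y)$), while an edge between $A$ and $D$ contributes $1+1$ on the left and $1+1+2\cdot 0$ on the right; the remaining four crossing configurations each give $1=1$. Identity (b) is verified identically once one observes that $\overline{\delta}_G(X,Y)$ consists precisely of the edges between $A$ and $D$, so that the relevant terms are $\delta_G(B)$, $\delta_G(C)$ and $\delta_G(A,D)$; the same ten-case check goes through.

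The main ``obstacle'' here is purely bookkeeping---keeping the four regions, the ten configurations and the various cut operators consistent. Since this is a standard submodular and posi-modular identity for cut functions appearing in textbooks, no deeper argument is needed, and the authors indeed simply cite \cite{book} rather than reprove it.
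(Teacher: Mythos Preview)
Your proposal is correct and is the standard case-by-case verification; the paper itself gives no proof at all but simply cites Proposition~1.2.1 in \cite{book}, as you observed. Note also that you silently (and correctly) read part~(b) as $c(\delta_G(X-Y))+c(\delta_G(Y-X))$ rather than the paper's repeated $c(\delta_G(X-Y))$, which is evidently a typo.
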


\subsection{Minimum cuts}
Given a capacitated graph $(G=(V,E),c)$, the mincut problem consists of finding a set $\emptyset \neq S \subsetneq V$ that minimizes $c(\delta_G(S))$. This problem has been widely studied. Due to the specific nature of our application, we are interested in a slight variation of this problem: given a capacitated graph $(H=(V+s,E),c)$ with a distinguished vertex $s \notin V$, we want to find a set $\emptyset \neq S \subsetneq V$ that minimizes $c(\delta_H(S))$.  In other words, we additionally require that no side of the cut consists of $s$ only. We denote the capacity of such a minimum cut by {\boldmath $\lambda_{(H,c)}(V)$}. 
We say that $(H=(V+s,E),c)$ is {\it $k$-edge-connected in $V$} for some positive integer $k$ if $\lambda_{(H,c)}(V) \geq k$. 
We strongly rely on the following algorithmic result which is due to Nagamochi and Ibaraki \cite{NI}.  

\begin{Lemma}\label{japonais}
Given a capacitated graph $(H=(V+s,E),c)$, we can compute $\lambda_{(H,c)}(V)$ and a set $\emptyset \neq S \subsetneq V$ that minimizes $c(\delta_H (S))$ in $O(n(m+n \textrm{ }\log\textrm{ } n))$ time.
\end{Lemma}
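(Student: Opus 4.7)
The plan is to invoke the classical global minimum cut algorithm of Nagamochi and Ibaraki \cite{NI} with a small modification that accounts for the distinguished vertex $s$. Their algorithm computes a minimum cut of an arbitrary capacitated graph in $O(n(m+n\log n))$ time by iterating a \emph{maximum adjacency ordering}: one chooses a starting vertex and, at each subsequent step, appends the vertex maximizing the total capacity of edges to the already-ordered set. The key lemma underlying the method states that if $u$ and $v$ are the last two vertices produced, then $c(\delta_H(v))$ equals the capacity of the minimum $u$-$v$ cut in $H$. The algorithm records this value as a candidate, contracts $u$ and $v$, and recurses on the smaller graph, returning the smallest candidate obtained over the $n$ iterations.

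For the variant needed here, I would run the very same procedure, only insisting that every maximum adjacency ordering be started at $s$. Then $s$ is in first position, so the last two vertices $u,v$ of the ordering always lie in $V$, and the recorded candidate cut $(\{v\},V+s-\{v\})$ is of the desired form $\emptyset\neq S\subsetneq V$. Contracting $u$ and $v$ yields a new vertex that is again viewed as lying in $V$ (both preimages do), so $s$ remains the only vertex to be avoided as the last of an ordering, and the recursive call inherits the same type of instance.

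Correctness follows by the standard inductive argument of Nagamochi and Ibaraki: any admissible cut $(S,V+s-S)$ with $\emptyset\neq S\subsetneq V$ either separates $u$ from $v$, in which case its capacity is at least that of the recorded candidate $c(\delta_H(v))$, or it leaves them on the same side, in which case it survives the contraction of $u$ and $v$ and is handled by induction on $|V(H)|$. The main point to check is that forbidding $s$ from being the last vertex of an ordering does not damage the original analysis; starting each ordering at $s$ is the cleanest way to enforce this invariant. Since this modification does not change the number of basic operations performed, the asymptotic running time remains $O(n(m+n\log n))$, as required.
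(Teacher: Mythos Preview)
The paper does not actually prove this lemma; it is stated without proof as a known result of Nagamochi and Ibaraki \cite{NI} and used as a black box throughout. Your argument correctly supplies the missing details: fixing $s$ as the starting vertex of every maximum adjacency ordering guarantees that $s$ is never among the last two vertices and hence never contracted, so every recorded candidate cut corresponds to a nonempty proper subset of $V$, and the standard inductive correctness proof and running-time analysis of the MA-ordering method carry over unchanged.
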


\subsection{Bisets}

Given a ground set $\Omega$, a {\it biset} {\boldmath${\sf X}$} consists of two sets {\boldmath$X_O,X_I$} $\subseteq \Omega$ with $X_I \subseteq X_O$. We call $X_I$ the {\it inner set} of ${\sf X}$, $X_O$ the {\it outer set} of ${\sf X}$ and {\boldmath$w({\sf X})$}$=X_O-X_I$ the {\it wall} of ${\sf X}$. We also say ${\sf X}=(X_O,X_I)$. We define the {\it complement} {\boldmath $\overline{{\sf X}}$} of $\X$ by $\overline{X}_O=\Omega -X_I$ and $\overline{X}_I=\Omega -X_O$. Observe that $w(\overline{{\sf X}})=w({\sf X})$.  We say that ${\sf X}$ is {\it trivial} with respect to $\Omega$ if $X_I = \emptyset$ or $X_O=\Omega$, {\it nontrivial} otherwise.
Given two bisets ${\sf X}$ and ${\sf Y}$, we define {\boldmath ${\sf X} \cup {\sf Y}$}$=(X_O \cup Y_O, X_I \cup Y_I)$, {\boldmath${\sf X} \cap {\sf Y}$}$=(X_O \cap Y_O, X_I \cap Y_I)$ and {\boldmath ${\sf X}-{\sf Y}$}$={\sf X} \cap \overline{{\sf Y}}$.  

Given a capacitated graph $(G=(V,E),c)$ and a positive integer $k$, we define a function ${\boldmath f}$ on the bisets on $V$ by {\boldmath $f({\sf X})$}$=k|w({\sf X})|+c(\delta_G(X_I,V-X_O))$. Observe that $f({\sf X})=f({\sf \overline{X}})$. This function will play a crucial role throughout the article.

We can now rephrase the definition of $(2,k)$-connectivity in terms of bisets. A capacitated graph $(G=(V,E),c)$ with $|V|\geq 3$ is $(2,k)$-connected if for every biset ${\sf X}$ which is nontrivial with respect to $V$, we have $f({\sf X})\geq 2k$. We also need the following slightly more advanced notion:  A capacitated graph $(H=(V+s,E),c)$ with $|V|\geq 3$ is called {\it $(2,k)$-connected in $V$} if for every biset ${\sf X}$ on $V$ which is nontrivial with respect to $V$, we have $f({\sf X})\geq 2k$. 
Observe that in $H=(V+s,E)$ the vertex $s$ belongs to $\overline{X}$ for any $X \subseteq V$.

\subsection{Splitting off}

Let $(H=(V+s,E),c)$ be a capacitated graph. For $v \in N_H(s)$ and a nonnegative integer $\alpha \leq c(sv)$, we denote by {\boldmath $(H,c)_v^{\alpha}$} the capacitated graph obtained from $(H,c)$ by decreasing the capacity of $sv$ by $\alpha$. If $c(sv)=0$ after the operation, we delete $sv$ from $H$.
For $(H,c)$ that is $(2,k)$-connected in $V$, we denote {\boldmath$U_{(H,c)}$}$=\{v \in V|\text{ }(H,c)_v^{1}\text{ is $(2,k)$-connected in $V$}\}$, a set that will play a significant role later on.
For a vertex $v \in N_H(s)$, we denote by {\boldmath $(H,c)^{max}_{v}$} the capacitated graph $(H,c)_v^{\alpha}$ where $\alpha$ is the maximum integer such that $(H,c)_v^{\alpha}$ is well-defined and $(2,k)$-connected in $V$.

For $u,v \in N_H(s)$ and a positive integer $\alpha\leq \min\{c(su),c(sv)\}$, we denote by {\boldmath $(H,c)_{u,v}^{\alpha}$} the capacitated graph obtained from $(H,c)$ by decreasing $c(su)$ and $c(sv)$ by $\alpha$ and increasing $c(uv)$ by $\alpha$. We delete edges of capacity $0$ and create the edge $uv$ if it does not exist yet. We also delete the arising loop if $u=v$. We call this operation the {\it $\alpha$-multiple splitting off} of $su$ and $sv$ and say that this $\alpha$-multiple splitting off contains $su$ and $sv$. We abbreviate $1$-multiple splitting off to {\it  splitting off}.
Suppose that $(H,c)$ is $(2,k)$-connected in $V$. We say that a pair $(su,sv)$ is {\it admissible} if $(H,c)_{u,v}^{1}$ is $(2,k)$-connected in $V$.
For  $u,v \in N_H(s)$, let $\alpha$ be the maximum integer such that $(H,c)_{u,v}^{\alpha}$ is well-defined and $(2,k)$-connected in $V$. We call an $\alpha$-mutiple splitting off of $(su,sv)$ a {\it maximal splitting off} of $(su,sv)$ and denote {\boldmath $(H,c)^{max}_{u,v}$}$=(H,c)_{u,v}^{\alpha}$. Observe that every maximal splitting off can be viewed as a series of splitting offs.

We next give an important characterization of admissible pairs in $H$. Given a pair $(su,sv)$, a biset ${\sf X}$ which is nontrivial with respect to $V$ with either $f({\sf X})\leq 2k+1$ and $u,v \in X_I$ or $f({\sf X})= 2k,u,v \in X_O$ and $\{u,v\}\cap X_I \neq \emptyset$ is said to {\it block} $(su,sv)$. The following result can be found as Lemma 3.1 in \cite{DgGSz} and will be frequently used.

\begin{Lemma}\label{block}
Given a capacitated graph $(H=(V+s,E),c)$ that is $(2,k)$-connected in $V$ and $u,v \in N_H(s)$, $(su,sv)$ is admissible if and only if there is no biset blocking it.
\end{Lemma}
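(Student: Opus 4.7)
The plan is to reduce the lemma to a direct computation of how the function $f$ behaves under a single splitting off. Since a splitting off on $(su,sv)$ does not alter the wall of any biset on $V$ (only edge capacities change), for any biset $\X$ on $V$ we have
\[
f_{new}(\X)-f(\X) \;=\; c(\delta_{H_{new}}(X_I,V+s-X_O)) - c(\delta_H(X_I,V+s-X_O)),
\]
where $H_{new}$ denotes $H$ after the splitting off. Because $s\notin X_O$ automatically, this change depends only on which of the three ``regions'' $X_I$, $w(\X)$, $V-X_O$ each of $u$ and $v$ lies in. I would then work through the (short) case analysis.

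The key observation is:
\begin{enumerate}[(a)]
\item if $u,v\in X_I$, the edges $su$ and $sv$ both leave the cut while $uv$ does not enter it, so $f$ drops by $2$;
\item if exactly one of $u,v$ lies in $X_I$ and the other in $w(\X)$, then exactly one of $su,sv$ contributed to the cut, and $uv$ still does not enter it because one endpoint sits in $X_O\setminus X_I$, so $f$ drops by $1$;
\item in every remaining case $f$ is unchanged: either $\{u,v\}\cap X_I=\emptyset$ (so none of $su,sv,uv$ lies in the cut before or after), or one endpoint is in $X_I$ and the other outside $X_O$, in which case the loss from $su$ or $sv$ leaving the cut is exactly compensated by the newly created edge $uv$ crossing the same cut.
\end{enumerate}

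With (a)--(c) established, both directions follow immediately. For the forward direction, assume $(su,sv)$ admissible and suppose some biset $\X$ blocks it. If $u,v\in X_I$ and $f(\X)\le 2k+1$, then by (a) $f_{new}(\X)\le 2k-1<2k$; if $f(\X)=2k$ with $u,v\in X_O$ and $\{u,v\}\cap X_I\neq\emptyset$, then either (a) or (b) applies, again giving $f_{new}(\X)<2k$, contradicting admissibility. For the backward direction, take any nontrivial biset $\X$: if (c) applies then $f_{new}(\X)=f(\X)\ge 2k$; if (b) applies then $u,v\in X_O$ with $\{u,v\}\cap X_I\neq\emptyset$, and the hypothesis that $\X$ does not block forces $f(\X)\ge 2k+1$, so $f_{new}(\X)\ge 2k$; if (a) applies then $u,v\in X_I$ and $\X$ not blocking forces $f(\X)\ge 2k+2$, so again $f_{new}(\X)\ge 2k$.

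The only real work is the case analysis in (a)--(c); there is no genuine obstacle, but one must be careful with the slightly asymmetric treatment of vertices in $w(\X)$ versus $V-X_O$, since it is precisely the case ``one endpoint in $X_I$, the other outside $X_O$'' in which the new edge $uv$ crosses the cut and cancels the loss from the deleted edges. This is exactly why the second clause in the definition of a blocking biset only requires $u,v\in X_O$ (and not, say, $u,v\in X_I$): bisets whose $X_I$ fails to contain both $u$ and $v$ but whose $X_O$ contains both are precisely the ones where $f$ drops by $1$.
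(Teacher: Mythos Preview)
Your proof is correct. The paper itself does not prove this lemma; it merely cites it as Lemma~3.1 of \cite{DgGSz}. Your argument is the natural direct one: a single splitting off leaves $|w(\X)|$ untouched and changes the edge term of $f$ by $-2$, $-1$, or $0$ according to whether both, exactly one (with the other in $w(\X)$), or neither of $u,v$ lies in $X_I$ without compensation from the new edge $uv$. The case split in (a)--(c) is accurate, and the integrality of $f$ (which you use implicitly when passing from ``$\X$ does not block'' to $f(\X)\ge 2k+1$ or $f(\X)\ge 2k+2$) is guaranteed by the paper's standing assumption that all capacities are integers. The argument also handles the degenerate case $u=v$ correctly, since then only case~(a) or the first subcase of~(c) can occur. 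So you have supplied a complete proof where the paper defers to the literature.
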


A biset that blocks a pair of edges $(su,sv)$ with $u \neq v$ is called {\it horrifying}. Note that the wall of a horrifying biset contains at most one vertex.  Further, observe that we can check whether a given biset is horrifying in $O(m)$ time by applying the definition.

While the following result is not explicitly proven in \cite{DgGSz}, its proof is almost literally the same as the one of Lemma 3.4 in \cite{DgGSz}. We therefore omit it. The result nevertheless plays a key role in our algorithm.

\begin{Lemma}\label{new34}
Let $(H=(V+s,E),c)$ be a capacitated graph that is $(2,k)$-connected in $V$ with $c(\delta_H(s))$ even. Let ${\sf X}$ be a horrifying biset, $u \in X_I \cap N_H(s)$ and $v \in N_H(s)-X_I$. If a biset ${\sf Y}$ blocks $(su,sv)$, then either ${\sf X} \cup {\sf Y}$ is horrifying or ${\sf X}$ and ${\sf Y}$ have the same wall of size $1$.
\end{Lemma}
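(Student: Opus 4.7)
The approach is to derive a bisubmodular inequality for $f$ and then extract the desired conclusion from the $(2,k)$-connectivity hypothesis together with the parity of $c(\delta_H(s))$. The starting point is
\[
f(\X) + f(\Y) \ge f(\X \cup \Y) + f(\X \cap \Y),
\]
which I would justify by noting that the wall contribution satisfies the identity $|w(\X)| + |w(\Y)| = |w(\X \cup \Y)| + |w(\X \cap \Y)|$, while the cut term $c(\delta_H(Z_I, V+s - Z_O))$ is submodular in $\Z$ by a direct edge-counting argument analogous to the standard submodularity of the cut function, adapted to account for the walls.

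First I would collect the basic facts: since $\X$ is horrifying it blocks a pair of distinct edges, so $f(\X) \le 2k+1$ and $|w(\X)| \le 1$; because $u \in X_I$ and $v \notin X_I$ we have $u \ne v$, so $\Y$ also blocks a pair of distinct edges and therefore $f(\Y) \le 2k+1$ and $|w(\Y)| \le 1$. I would then split the analysis according to the position of $v$ in $\Y$: either $v \in Y_I$ (which covers the Type~1 blocking case for $\Y$, as well as the Type~2 case in which $u \in w(\Y)$), or $v \in w(\Y)$ with $u \in Y_I$ (the remaining Type~2 case). In both situations $u \in (X \cup Y)_I$, and $v$ lies either in $(X \cup Y)_I$ or in $w(\X \cup \Y)$; this is exactly the placement needed to conclude that $\X \cup \Y$ blocks $(su,sv)$ once its $f$-value is controlled.

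Second, in the main branch $u \in X_I \cap Y_I$ the biset $\X \cap \Y$ is nontrivial with respect to $V$ (its inner set contains $u$, and its outer set is contained in $X_O \subsetneq V$), so by $(2,k)$-connectivity in $V$ we have $f(\X \cap \Y) \ge 2k$, and the submodular inequality yields $f(\X \cup \Y) \le f(\X) + f(\Y) - 2k \le 2k+2$. To push this bound down to $2k+1$ I would exploit the hypothesis that $c(\delta_H(s))$ is even: using $c(\delta_H(Z_I)) \equiv \sum_{v \in Z_I} d_H(v) \pmod 2$, the parity of $f(\Z)$ is controlled by $|w(\Z)|$ and by the inner-set degree sum, and a short parity computation shows that the values $f(\X), f(\Y), f(\X \cup \Y), f(\X \cap \Y)$ satisfy a congruence which, combined with $f(\X \cup \Y) \le 2k+2$, forces either $f(\X \cup \Y) \le 2k+1$ or simultaneous tightness of every inequality above. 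In the former case the placement of $u, v$ established above shows that $\X \cup \Y$ blocks $(su,sv)$ and hence is horrifying; in the latter, tracing the tightness conditions, together with $|w(\X)|, |w(\Y)| \le 1$, pins down the unique configuration in which $w(\X) = w(\Y)$ and this common wall has size exactly one.

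The remaining branch $u \in w(\Y)$ is handled symmetrically: either $(X \cap Y)_I$ still contains some vertex and the main argument goes through, or else $\{u\} \subseteq X_I \cap w(\Y)$ together with $|w(\Y)| = 1$ and the wall-size bounds for horrifying bisets already forces the degenerate case $w(\X) = w(\Y) = \{u\}$, possibly after swapping $\Y$ for $\overline{\Y}$. The main obstacle is the parity step: keeping careful track of the $\bmod\,2$ parities of $f$ on the four bisets and showing that the hypothesis $c(\delta_H(s))$ even is exactly what rules out $f(\X \cup \Y) = 2k+2$ except in the stated degenerate case. This is the same bookkeeping that drives the proof of Lemma~3.4 in~\cite{DgGSz}, which the excerpt indicates should be imitated almost verbatim.
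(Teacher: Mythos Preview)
The paper itself gives no proof of this lemma; it defers entirely to Lemma~3.4 of \cite{DgGSz}. So there is no in-paper argument to compare against, but your sketch has a genuine gap at precisely the step you flag as the main obstacle.

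The slack $f(\X)+f(\Y)-f(\X\cup\Y)-f(\X\cap\Y)$ is \emph{not} in general even for bisets. On ground set $\{1,2,3\}$ with $\X=(\{1,2\},\{1\})$, $\Y=(\{2\},\{2\})$ and a single capacity-$1$ edge $\{1,2\}$, the wall term is modular but the edge term leaves slack~$1$. So no congruence among the four $f$-values lets you upgrade $f(\X\cup\Y)\le 2k+2$ to $\le 2k+1$. Evenness of $c(\delta_H(s))$ does not pin down the parity of an individual $f(\Z)$ either: edges from $Z_I$ into $w(\Z)$ contribute to $\sum_{x\in Z_I}c(\delta_H(x))$ but not to $f(\Z)$, so the claimed parity control breaks down whenever the wall is nonempty. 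And your fallback ``tightness everywhere forces $w(\X)=w(\Y)$ of size~$1$'' is unjustified: nothing you wrote excludes the configuration $w(\X)=w(\Y)=\emptyset$, $f(\X)=f(\Y)=2k+1$, $f(\X\cap\Y)=2k$, $f(\X\cup\Y)=2k+2$ with zero slack, in which neither alternative of the lemma holds. Ruling this out is where the real work lies; in \cite{DgGSz} it comes from a case analysis that also brings in the second biset inequality (the analogue of Proposition~\ref{ftyig}(b), in which the edge $su$ with $u\in X_I\cap Y_I$ contributes to the cross term) and from separately checking that $\X\cup\Y$ is nontrivial with respect to $V$. Your branch $u\in w(\Y)$ is likewise unfinished: swapping $\Y$ for $\overline{\Y}$ changes the union under study, so a statement about $\X\cup\overline{\Y}$ says nothing about $\X\cup\Y$.
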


We also require the following result that can be found in \cite{DgGSz} as Proposition 3.2.
\begin{Lemma}\label{dehors}
Let $(H=(V+s,E),c)$ be a capacitated graph that is $(2,k)$-connected in $V$ and let ${\sf X}$ be a horrifying biset. Then $N_H(s)-X_O \neq \emptyset$.
\end{Lemma}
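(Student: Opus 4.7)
My plan is to proceed by contradiction: I would assume $N_H(s) \subseteq X_O$ and exploit the complement of ${\sf X}$ taken in $V$ to produce a biset whose $f$-value is incompatible with the blocking behaviour of ${\sf X}$. Concretely, I would take the biset $\overline{\sf X}$ with $\overline{X}_O = V - X_I$ and $\overline{X}_I = V - X_O$. Since ${\sf X}$ is horrifying it blocks a pair of distinct edges incident to $s$, so it is nontrivial with respect to $V$ (both $X_I \neq \emptyset$ and $X_O \neq V$), and consequently so is $\overline{\sf X}$. The assumed $(2,k)$-connectivity of $H$ in $V$ then yields $f(\overline{\sf X}) \geq 2k$.

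The key step will be a short direct computation relating $f({\sf X})$ and $f(\overline{\sf X})$. Both bisets share the same wall, so the difference is only in the cut contribution: the bulk terms $c(\delta_H(X_I, V - X_O))$ and $c(\delta_H(V - X_O, X_I))$ coincide, and the $s$-contribution splits as $c(\delta_H(X_I, s))$ on one side and $c(\delta_H(V - X_O, s))$ on the other. Under the standing contrapositive assumption $N_H(s) \subseteq X_O$, the latter vanishes, giving the clean identity $f({\sf X}) = f(\overline{\sf X}) + c(\delta_H(X_I, s))$.

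I would then feed this into the two branches of the blocking definition. Let $u \neq v$ be a pair blocked by ${\sf X}$. In the branch $f({\sf X}) \leq 2k + 1$ with $u, v \in X_I$, the identity combined with $f(\overline{\sf X}) \geq 2k$ forces $c(\delta_H(X_I, s)) \leq 1$, which contradicts the fact that both $su$ and $sv$ contribute (with positive integer capacity) to that quantity. In the other branch, where $f({\sf X}) = 2k$ and $\{u, v\} \cap X_I \neq \emptyset$, the identity forces $c(\delta_H(X_I, s)) \leq 0$, while at least one of $su, sv$ is a positive-capacity edge from $s$ into $X_I$. The only delicate point is to be careful with the convention for $f$ on a biset of $V$ viewed in $H$ so that edges from $X_I$ to $s$ really enter the formula; once that bookkeeping is fixed I do not foresee a serious obstacle.
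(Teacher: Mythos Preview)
Your argument is correct. The paper itself does not prove this lemma; it simply imports it from \cite{DgGSz} as Proposition~3.2, so there is no in-paper proof to compare against. Your complement-biset computation is exactly the natural way to see the statement: under the paper's convention (made explicit by the remark that ``in $H=(V+s,E)$ the vertex $s$ belongs to $\overline{X}$ for any $X\subseteq V$''), the contribution of $s$-edges to $f$ sits on the $X_I$-side, so with $N_H(s)\subseteq X_O$ one gets $f(\X)=f(\overline{\X})+c(\delta_H(X_I,s))$, and then $f(\overline{\X})\ge 2k$ together with the blocking inequalities and the integrality and positivity of capacities give the contradiction in both branches. The only point you flagged as delicate---that $s$-edges really enter $f$---is consistent with how $f$ is used elsewhere in the paper (e.g.\ in the proof of Claim~\ref{notv}, where $f(\X\cup\Y)-k|w(\X\cup\Y)|$ equals a degree in $H-p$ that includes $s$-edges), so your bookkeeping is sound.
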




 A series of  splitting offs at $s$ that results in a capacitated graph in which $s$ is an isolated vertex is called a {\it complete splitting off} of $(H,c)$. It is easy to see that a complete splitting off exists if and only if $c(\delta_H(s))$ is even. A complete splitting off is {\it admissible} if each of the  splitting offs it contains is admissible in the current capacitated graph when being chosen. This is equivalent to the finally obtained capacitated graph being $(2,k)$-connected after deleting $s$. Finding such a complete admissible splitting off is the main difficulty in our algorithm. We strongly rely on a characterization of capacitated graphs having a complete admissible splitting off that can be found in \cite{DgGSz}. Before stating it, we need the following definition:

Let $(H=(V+s,E),c)$ be a capacitated graph that is $(2,k)$-connected in $V$ and with $c(\delta_H(s))$ even. An {\it obstacle} is a collection $\mathcal{B}$ of bisets in $V$ and a vertex $t \in N_H(s)$ satisfying the following:
\begin{eqnarray}
&& c(st) \text{ is odd and } t\in U_{(H,c),}\label{propt}\\
&& w({\sf B})=\{t\} \text { and } f({\sf B})=2k \text{ for all } {\sf B}\in \mathcal{B},\label{propB}\\
&& B_I \cap B'_I=\emptyset \text{ for all distinct } {\sf B},{\sf B}' \in \mathcal{B},\label{disjointB}\\
&& N_H(s)- \{t\}\subseteq \bigcup_{{\sf B} \in \mathcal{B}}B_I.\label{covering}
\end{eqnarray} 

We say that $t$ is the {\it special} vertex of the obstacle. It is easy to see that a capacitated graph containing an obstacle does not have a complete admissible splitting off, as every  splitting off of $(st,su)$ for some $u \in N_H(s)$ is blocked by the biset ${\sf B}\in \mathcal{B}$ with $u \in B_I$ and by Lemma \ref{block}. In \cite{DgGSz}, it is proved that the converse is also true:

\begin{Theorem}\label{splitcomplete}
Let $(H=(V+s,E),c)$ be a capacitated graph that is $(2,k)$-connected in $V$ for some $k \geq 2$ with $c(\delta_H(s))$ even. Then $(H,c)$ has a complete admissible splitting off at $s$ if and only if $(H,c)$ does not contain an obstacle.
\end{Theorem}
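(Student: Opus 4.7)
The plan is to prove the theorem by induction on $c(\delta_H(s))$, which is even. The forward direction, already sketched in the text, relies on the fact that for an obstacle $(\mathcal{B},t)$ and any $u\in N_H(s)\setminus\{t\}$, the biset ${\sf B}\in\mathcal{B}$ with $u\in B_I$ blocks $(st,su)$ by Lemma~\ref{block}, since $f({\sf B})=2k$, $t,u\in B_O$, and $u\in B_I$. Because $c(st)$ is odd, $st$ must participate in at least one splitting off during any complete splitting off, so no admissible one can exist; the degenerate case of splitting $st$ with itself is excluded since $t\in U_{(H,c)}$ by \eqref{propt}. The base case $c(\delta_H(s))=0$ of the induction is trivial.

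For the inductive step, assume $(H,c)$ has no obstacle and $c(\delta_H(s))\geq 2$. The problem reduces to the key claim: \emph{there is an admissible pair $(su,sv)$ such that $(H,c)_{u,v}^{1}$ is still obstacle-free}. Indeed, such a splitting yields a graph that is $(2,k)$-connected in $V$ with strictly smaller $c(\delta_H(s))$, so induction applies. I would prove the key claim in two parts. In part (A), I show that at least one admissible pair exists: otherwise every pair $(su,sv)$ is blocked by a horrifying biset, and by Lemma~\ref{new34}, starting from a fixed horrifying biset ${\sf X}$ and an arbitrary $v\in N_H(s)\setminus X_O$ guaranteed by Lemma~\ref{dehors}, the blocking bisets can be iteratively uncrossed into a family with a common wall $\{t\}$, each of $f$-value exactly $2k$, whose inner sets cover $N_H(s)\setminus\{t\}$; Proposition~\ref{ftyig} provides the submodular and co-submodular identities needed to keep the uncrossing inside the $f$-tight bisets. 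This family together with $t$ satisfies conditions \eqref{propt}--\eqref{covering}, i.e.\ forms an obstacle, contradicting the hypothesis.

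In part (B), assuming every admissible splitting creates an obstacle, I compare the obstacles arising in $(H,c)_{u,v}^{1}$ for different admissible pairs $(u,v)$. A biset ${\sf Z}$ that becomes newly $f$-tight in the split graph must have strictly lost $f$-value, which constrains how $u$ and $v$ can sit relative to $w({\sf Z})$, $Z_I$ and $V\setminus Z_O$. Uncrossing the bisets of a newly appearing obstacle with a horrifying biset that blocks some alternative admissible pair, via Proposition~\ref{ftyig} and Lemma~\ref{new34}, should either reveal a biset already blocking the original pair in $(H,c)$ (contradicting admissibility in $(H,c)$) or assemble a family satisfying \eqref{propt}--\eqref{covering} inside $(H,c)$ itself, contradicting obstacle-freeness. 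I expect part (B) to be the main obstacle of the proof: one has to bookkeep carefully which bisets gain tightness upon splitting versus those already tight in $(H,c)$, and the uncrossing may need to be iterated across several candidate admissible pairs before a contradiction is reached.
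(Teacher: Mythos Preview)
The paper does not prove this theorem; it quotes it from \cite{DgGSz}, giving only the easy direction in one sentence (an obstacle blocks every pair $(st,su)$ with $u\neq t$, and oddness of $c(st)$ forces at least one such pair in any complete splitting). Your forward direction matches that sketch.

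For the converse, your inductive outline is the natural one and part~(A) is indeed a component of the argument in \cite{DgGSz}, but what you have written is a plan, not a proof. The substantive gap is part~(B): you assert that if every admissible splitting produced an obstacle one could uncross the resulting families back into an obstacle in $(H,c)$, but you give no mechanism for this, and the difficulty is real. Note that Lemma~\ref{antiobstacle} in the present paper proves obstacle-freeness is preserved under admissible splittings \emph{only} from a minimal even extension; its proof uses Proposition~\ref{minevext}(c) and (d) in an essential way, and those properties fail for a general $(H,c)$ that is merely $(2,k)$-connected in $V$ with even $c(\delta_H(s))$. So after one admissible split the parity and $U$-membership conditions in \eqref{propt} can genuinely switch on for some vertex, and ruling this out for \emph{some} admissible pair requires a careful comparison of the would-be obstacle in $(H,c)_{u,v}^{1}$ with blocking bisets in $(H,c)$---precisely the case analysis carried out in \cite{DgGSz} and absent from your sketch. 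Your part~(A) is also thinner than it looks: Lemma~\ref{new34} gives you either a larger horrifying biset or a common wall of size one, but turning ``common wall $\{t\}$'' into the full obstacle structure (pairwise disjoint inner sets in \eqref{disjointB}, tightness $f=2k$ in \eqref{propB}, odd $c(st)$ and $t\in U_{(H,c)}$ in \eqref{propt}) needs several further uncrossing and counting steps that you have not supplied.
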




\subsection{Basic algorithms}

In this section, we show that we can efficiently compute the maximum decrease of the capacity of an edge and the maximum multiplicity of a splitting off of an edge pair that maintain certain connectivity requirements. We first show this for the case of edge-connectivity and then apply this for the case of $(2,k)$-connectivity. All of these results are simple consequences of Lemma \ref{japonais}.

\begin{Lemma}\label{delete}
Given a capacitated graph $(H=(V+s,E),c)$ that is $k$-edge-connected in $V$ and a vertex $v \in N_H(s)$, we can compute the maximal $\alpha$ such that $(H',c')=(H,c)_v^{\alpha}$ is $k$-edge-connected in $V$ in $O(n(m+n \textrm{ }\log\textrm{ } n))$ time. Further, if $c'(sv)\neq 0$, we can compute a set $S$ with $v \in S \subsetneq V$ and $c'(\delta_{H'}(S))=k$ in $O(n(m+n \textrm{ }\log\textrm{ } n))$ time.
\end{Lemma}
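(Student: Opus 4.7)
First, I analyse how $\alpha$ interacts with cuts. The operation $(H,c)_v^{\alpha}$ only modifies the capacity $c(sv)$ (it decreases it by $\alpha$); for any $\emptyset\neq S\subsetneq V$ we have $sv\in\delta_H(S)$ iff $v\in S$, because $s\notin S$. Hence $c(\delta_H(S))$ drops by $\alpha$ when $v\in S$ and is unchanged otherwise. Since $(H,c)$ is already $k$-edge-connected in $V$, the operation preserves this property iff $\alpha\le \lambda^v-k$, where
\[
\lambda^v \;:=\; \min\{c(\delta_H(S)) \,:\, v\in S\subsetneq V\}.
\]
Together with the well-definedness constraint $\alpha\le c(sv)$, the maximal valid value is $\alpha^*:=\min\bigl(c(sv),\,\lambda^v-k\bigr)$.

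To compute $\lambda^v$ with a single call to Lemma \ref{japonais}, let $(H^-,c)$ denote the capacitated graph obtained from $(H,c)$ by deleting the edge $sv$. For every $\emptyset\neq S\subsetneq V$ one has $c(\delta_{H^-}(S))=c(\delta_H(S))-c(sv)\,[v\in S]$, hence
\[
\lambda_{(H^-,c)}(V) \;=\; \min\bigl(\lambda^v-c(sv),\;\lambda^{\bar v}\bigr),
\]
where $\lambda^{\bar v}:=\min\{c(\delta_H(S)) \,:\, \emptyset\neq S\subsetneq V,\,v\notin S\}$. Apply Lemma \ref{japonais} to $(H^-,c)$ to obtain $\lambda^-$ and a minimiser $S_0$ in $O(n(m+n\log n))$ time, and split into two cases. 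If $v\in S_0$, then $c(\delta_H(S_0))=\lambda^-+c(sv)$; combined with $c(\delta_{H^-}(S))\ge \lambda^-$ for every $S$ with $v\in S\subsetneq V$, this forces $\lambda^v=\lambda^-+c(sv)$, and $S_0$ is a witness. If $v\notin S_0$, then $\lambda^-=\lambda^{\bar v}$ and $\lambda^{\bar v}\le \lambda^v-c(sv)$, so $\lambda^v\ge c(sv)+\lambda^{\bar v}\ge c(sv)+k$; in particular $\alpha^*=c(sv)$ and $c'(sv)=0$, so the ``further'' part of the statement is vacuous.

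Finally, we output $\alpha^*$ and $(H',c'):=(H,c)_v^{\alpha^*}$, which together take only $O(m)$ extra work on top of the single call to Lemma \ref{japonais}. For the witness claim, $c'(sv)\neq 0$ forces $\alpha^*<c(sv)$, which is possible only in the first case above, where $\alpha^*=\lambda^v-k$ and
\[
c'(\delta_{H'}(S_0)) \;=\; c(\delta_H(S_0))-\alpha^* \;=\; \lambda^v-(\lambda^v-k) \;=\; k.
\]
The main delicate point is the case split: when the minimum cut of $H^-$ returned by Lemma \ref{japonais} does not have $v$ on its $S$-side, $\lambda^v$ itself is not exposed, but the inequality $\lambda^v\ge c(sv)+k$ suffices to resolve the lemma.
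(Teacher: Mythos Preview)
Your proof is correct and follows essentially the same approach as the paper: both delete the edge $sv$ entirely, apply Lemma~\ref{japonais} once to the resulting graph, and read off $\alpha$ and the witness set from the returned mincut value and minimiser. The only cosmetic difference is that the paper packages your two cases into the single formula $\alpha=\min\{\gamma,\gamma-k+\lambda_{(H_\gamma,c_\gamma)}(V)\}$ with $\gamma=c(sv)$, where the case $v\notin S_0$ is absorbed by the outer $\min$ with $\gamma$ (since then $\lambda_{(H_\gamma,c_\gamma)}(V)\ge k$); your explicit case split and the auxiliary quantities $\lambda^v,\lambda^{\bar v}$ make the same reasoning more visible but are not needed.
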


\begin{proof}
Let $\gamma:=c(sv)$ and $(H_{\gamma},c_{\gamma})=(H,c)_{v}^{\gamma}$. Obviously we have $\alpha \leq \gamma$. The other condition $\alpha$ needs to satisfy is that $c'(\delta_{H'}(X))\geq k$ for every $\emptyset\neq X \subsetneq V$. If $v\notin X$, we obtain $c'(\delta_{H'}(X))=c(\delta_{H}(X))\geq k$. If $v \in X$, the condition is satisfied if and only if $0 \geq k-c'(\delta_{H'}(X))=k-(c_{\gamma}(\delta_{H_{\gamma}}(X))-\alpha+\gamma)=\alpha-(\gamma-k+c_{\gamma}(\delta_{H_{\gamma}}(X)))$. It follows that $\alpha=\min\{\gamma,\gamma-k+\lambda_{(H_{\gamma},c_{\gamma})}(V)\}$.  By Lemma \ref{japonais}, we can compute $\lambda_{(H_{\gamma},c_{\gamma})}(V)$, and hence $\alpha$, and also a set $S \subsetneq V$ with $c_{\gamma}(\delta_{H_{\gamma}}(S))=\lambda_{(H_{\gamma},c_{\gamma})}(V)$ in $O(n(m+n \textrm{ }\log\textrm{ } n))$ time. If $c'(sv)\neq 0$, we have $v \in S$ and $c'(\delta_{H'}(S))=k$.
\qed \end{proof}


\begin{Lemma}\label{split}
Given a capacitated graph $(H=(V+s,E),c)$ that is $k$-edge-connected in $V$ and vertices $u,v \in N_H(s)$, we can compute the maximal $\alpha$ such that $(H',c')=(H,c)_{u,v}^{\alpha}$ is $k$-edge-connected in $V$ in $O(n(m+n \textrm{ }\log\textrm{ } n))$ time. Further, if $c'(su),c'(sv)\neq 0$, we can compute a set $S$ with $u,v \in S \subsetneq V$ and $c'(\delta_{H'}(S))\leq k+1$ in $O(n(m+n \textrm{ }\log\textrm{ } n))$ time.
\end{Lemma}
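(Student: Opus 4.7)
The plan is to mirror the proof of Lemma \ref{delete} and reduce the problem to a single call of the mincut subroutine of Lemma \ref{japonais}. Set $\gamma := \min\{c(su), c(sv)\}$, which is an obvious upper bound on $\alpha$. The key observation is that for any $\alpha \in \{0,1,\dots,\gamma\}$ and any $\emptyset \neq X \subsetneq V$, the capacity of the cut in $(H',c')$ equals $c(\delta_H(X)) - 2\alpha$ when $\{u,v\} \subseteq X$ and equals $c(\delta_H(X))$ otherwise; indeed $s \notin X$ and only the capacities of $su$, $sv$, and $uv$ are altered by the splitting off. Hence the $k$-edge-connectivity requirement on $(H',c')$ is binding only on sets $X$ that contain both $u$ and $v$.

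Following the approach of Lemma \ref{delete}, I would compute $(H_\gamma, c_\gamma) := (H, c)_{u,v}^\gamma$ and apply Lemma \ref{japonais} to obtain $\lambda := \lambda_{(H_\gamma, c_\gamma)}(V)$ together with a minimizing set $S \subsetneq V$ in $O(n(m+n\log n))$ time. If $\lambda \geq k$, then $(H_\gamma, c_\gamma)$ is already $k$-edge-connected in $V$ and $\alpha = \gamma$ is optimal. Otherwise $\lambda < k$, and since $(H,c)$ is $k$-edge-connected in $V$, the returned $S$ must satisfy $\{u,v\} \subseteq S$: any $S$ violating this would give $c_\gamma(\delta_{H_\gamma}(S)) = c(\delta_H(S)) \geq k$, contradicting $\lambda < k$. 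Then $c(\delta_H(S)) = \lambda + 2\gamma$, and the feasibility condition $c(\delta_H(S)) - 2\alpha \geq k$ yields $\alpha = \gamma + \lfloor (\lambda - k)/2 \rfloor$. Combining the two cases gives the unified formula $\alpha = \min\{\gamma,\ \gamma + \lfloor (\lambda - k)/2 \rfloor\}$.

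For the second part, the hypothesis $c'(su), c'(sv) \neq 0$ forces $\alpha < \gamma$, placing us in the second case above. Taking $S$ as already returned by the subroutine, we have $u, v \in S$ and a direct calculation gives $c'(\delta_{H'}(S)) = c(\delta_H(S)) - 2\alpha = \lambda - 2\lfloor (\lambda - k)/2 \rfloor$, which equals $k$ or $k+1$ according to the parity of $\lambda - k$, hence is at most $k+1$. The dominant cost is the single invocation of Lemma \ref{japonais}, so the overall running time is $O(n(m+n\log n))$ as required. The only point that needs genuine care is verifying that in the hard case the returned minimizer automatically contains both $u$ and $v$; this is exactly where the standing hypothesis that $(H,c)$ is $k$-edge-connected in $V$ is used, and it is the step I would flag as the main (mild) obstacle of the argument.
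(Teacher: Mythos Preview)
Your proof is correct and follows essentially the same route as the paper: set $\gamma=\min\{c(su),c(sv)\}$, apply Lemma~\ref{japonais} once to $(H_\gamma,c_\gamma)$ to obtain $\lambda$ and a minimizing set $S$, and derive $\alpha=\min\{\gamma,\gamma+\lfloor(\lambda-k)/2\rfloor\}$. You are in fact slightly more explicit than the paper in justifying why the minimizer $S$ must contain both $u$ and $v$ when $\lambda<k$, which is precisely the point needed for the second assertion.
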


\begin{proof}
Let $\gamma=\min \{c(su),c(sv)\}$ and $(H_{\gamma},c_{\gamma})=(H,c)_{u,v}^{\gamma}$. Obviously we have $\alpha \leq \gamma$. The other condition $\alpha$ needs to satisfy is that $c'(\delta_{H'}(X))\geq k$ for every $\emptyset\neq X \subsetneq V$.  If $\{u,v\}- X\neq \emptyset$, we obtain $c'(\delta_{H'}(X))=c(\delta_{H}(X))\geq k$.  If $u,v \in X$, the condition is satisfied if and only if $0 \geq k-c'(\delta_{H'}(X))=k-(c_{\gamma}(\delta_{H_\gamma}(X))-2\alpha+2\gamma)=2(\alpha-(\gamma+\frac{1}{2}c_{\gamma}(\delta_{H_\gamma}(X))-\frac{1}{2}k))$. It follows that $\alpha=\min\{\gamma,\lfloor \gamma+\frac{1}{2}\lambda_{(H_{\gamma},c_{\gamma})}(V)-\frac{1}{2} k\rfloor \}$.  By Lemma \ref{japonais}, we can compute $\lambda_{(H_{\gamma},c_{\gamma})}(V)$, and hence $\alpha$,  and also a set $S \subsetneq V$ with $c_{\gamma}(\delta_{H_{\gamma}}(S))=\lambda_{(H_{\gamma},c_{\gamma})}(V)$, in $O(n(m+n \textrm{ }\log\textrm{ } n))$ time. If $c'(su),c'(sv)\neq 0$, we have $u,v \in S$ and $c'(\delta_{H'}(S))\leq k+1$.
\qed \end{proof}

\begin{Lemma}\label{bigdelete}
Given a capacitated graph $(H=(V+s,E),c)$ that is $(2,k)$-connected in $V$ and a vertex $v \in N_H(s)$, we can compute $(H,c)^{max}_{v}$ in $O(n^2(m+n \textrm{ }\log\textrm{ } n))$ time.
\end{Lemma}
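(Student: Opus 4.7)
The plan is to decompose $(2,k)$-connectedness in $V$ into a linear-sized family of ordinary edge-connectivity requirements, each amenable to Lemma \ref{delete}. Concretely, $(H,c)$ is $(2,k)$-connected in $V$ if and only if (i) $(H,c)$ is $2k$-edge-connected in $V$ and (ii) for every $w \in V$, the graph $(H-w,c)$ is $k$-edge-connected in $V-w$. The equivalence is the standard case split on the size of the wall of a candidate biset $\X$: an empty wall yields (i); a singleton wall $\{w\}$ yields (ii); a wall of size at least $2$ makes $f(\X) \geq 2k$ automatic.

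Given this characterization, I would compute the maximum feasible decrease of $c(sv)$ independently for each of these $n+1$ conditions and return the minimum. For (i), I would apply Lemma \ref{delete} to $(H,c)$ with connectivity threshold $2k$ to obtain the maximum value $\alpha_0$ that preserves $2k$-edge-connectivity in $V$. For each $w \in V$ with $w \neq v$, I would apply Lemma \ref{delete} to $(H-w,c)$ with threshold $k$ to obtain $\alpha_w$; the hypothesis $v \in N_{H-w}(s)$ holds since $v \neq w$, and the required $k$-edge-connectivity of $(H-w,c)$ in $V-w$ is exactly (ii) applied to the input. For $w = v$, the edge $sv$ is absent from $H-v$, so condition (ii) for $w=v$ is unaffected by the decrease and I may set $\alpha_v := c(sv)$. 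Finally, I would return $(H,c)_v^{\alpha}$ with $\alpha := \min\bigl(\alpha_0, \min_{w \in V}\alpha_w\bigr)$.

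Correctness is immediate from the equivalence above combined with the optimality guarantee of Lemma \ref{delete} on each subproblem, and integrality of $\alpha$ comes for free from the lemma. For the running time, there are $n+1$ invocations of Lemma \ref{delete}, each costing $O(n(m + n\log n))$, for a total of $O(n^2(m + n\log n))$ as claimed. I do not anticipate any real obstacle: the argument orchestrates black-box calls to Lemma \ref{delete}, and the only delicate point is the biset-to-edge-connectivity translation in the first paragraph, which is routine bookkeeping.
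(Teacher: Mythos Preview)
Your proposal is correct and follows essentially the same approach as the paper: both decompose $(2,k)$-connectivity in $V$ into the single $2k$-edge-connectivity requirement plus the $n$ per-vertex $k$-edge-connectivity requirements, handle the case $w=v$ separately as imposing no constraint, apply Lemma~\ref{delete} to each of the remaining $n$ subproblems, and take the minimum. The only cosmetic difference is that you spell out the wall-size case split justifying the decomposition, which the paper leaves implicit.
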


\begin{proof}By definition, $(H,c)_{v}^{max}=(H,c)_{v}^{\alpha}$ where $\alpha$ is the maximum integer such that $(H,c)_{v}^{\alpha}$ is $2k$-edge-connected in $V$ and $(H,c)_{v}^{\alpha}-x$ is $k$-edge-connected in $V-x$ for all $x\in V$. We first compute the maximum integer $\alpha '$ such that $(H,c)_{v}^{\alpha'}$ is $2k$-edge-connected in $V$. Using Lemma \ref{delete}, this can be done in $O(n(m+n \textrm{ }\log\textrm{ } n))$ time. Next observe that for any nonnegative integer $\beta$, $(H,c)_v^{\beta}-v=(H,c)-v$ is always $k$-edge-connected in $V-v$ by assumption. Now consider $x\in V-v$ and observe that for any nonnegative integer $\beta$, we have $(H,c)_v^{\beta}-x=(H-x,c)_v^{\beta}$. It follows from Lemma \ref{delete} that we can compute the maximum integer $\alpha_x$ such that $(H,c)_{v}^{\alpha_x}-x$ is $k$-edge-connected in $V-x$ in $O(n(m+n \textrm{ }\log\textrm{ } n))$ time. We now can compute $\alpha=\min\{\alpha',\min_{x \in V-v}\alpha_x\}$. The overall running time is $O(n^2(m+n \textrm{ }\log\textrm{ } n))$. 
\qed \end{proof}

\begin{Lemma}\label{bigsplit}
Given a capacitated graph $(H=(V+s,E),c)$ that is $(2,k)$-connected in $V$ and vertices $u,v \in N_H(s)$, we can compute $(H',c')=(H,c)^{max}_{u,v}$ in $O(n^2(m+n \textrm{ }\log\textrm{ } n))$ time. Further, if $c'(su),c'(sv)\neq 0$, we can compute a biset blocking $(su,sv)$ in $(H',c')$ in $O(n^2(m+n \textrm{ }\log\textrm{ } n))$ time.
\end{Lemma}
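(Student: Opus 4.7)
The plan is to mirror Lemma \ref{bigdelete}. Since $k|w({\sf X})| \geq 2k$ whenever $|w({\sf X})| \geq 2$, only bisets with $|w({\sf X})| \leq 1$ can be tight for $f({\sf X}) \geq 2k$; equivalently, $(H,c)_{u,v}^{\beta}$ is $(2,k)$-connected in $V$ if and only if it is $2k$-edge-connected in $V$ and $(H,c)_{u,v}^{\beta}-x$ is $k$-edge-connected in $V-x$ for every $x \in V$. I will decompose the bound on $\alpha$ along these constraints.

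First, I apply Lemma \ref{split} with the parameter $2k$ in place of $k$ to $(H,c)$ and the pair $(u,v)$ to obtain the maximum $\alpha'$ such that $(H,c)_{u,v}^{\alpha'}$ is $2k$-edge-connected in $V$, together with, when available, a set $S' \subsetneq V$ containing $u,v$ with $c'(\delta_{H'}(S')) \leq 2k+1$. Next, for each $x \in V-\{u,v\}$, using the identity $(H,c)_{u,v}^{\beta}-x=(H-x,c|_{H-x})_{u,v}^{\beta}$, I apply Lemma \ref{split} to $(H-x,c|_{H-x})$ with the pair $(u,v)$ to obtain $\alpha_x$ and, when available, a set $S_x \subsetneq V-x$ containing $u,v$. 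For $x=u$ (the case $x=v$ is symmetric), deleting $u$ also removes both the reduced edge $su$ and the newly created edge $uv$, so $(H,c)_{u,v}^{\beta}-u=(H-u,c|_{H-u})_v^{\beta}$; I apply Lemma \ref{delete} to $(H-u,c|_{H-u})$ with vertex $v$ to obtain $\alpha_u$ and, when available, a set $S_u \subsetneq V-u$ containing $v$ with $c'(\delta_{H'-u}(S_u))=k$. Taking $\alpha=\min\{\alpha',\min_{x \in V}\alpha_x\}$ produces $(H',c')$; the $O(n)$ subroutine calls cost $O(n(m+n\log n))$ each, yielding the announced $O(n^2(m+n\log n))$ bound.

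For the second claim, when $c'(su),c'(sv)\neq 0$ the trivial bound $\alpha \leq \min\{c(su),c(sv)\}$ is not tight, so one of the constraints above realizes $\alpha$ and, by the final parts of Lemmas \ref{split} and \ref{delete}, the corresponding witness set is available. I turn it into a blocking biset: if $\alpha=\alpha'$, take ${\sf X}=(S',S')$ with empty wall, so $f({\sf X})=c'(\delta_{H'}(S'))\leq 2k+1$ with $u,v\in X_I$ (Type 1); if $\alpha=\alpha_x$ for $x\notin\{u,v\}$, take ${\sf X}=(S_x \cup \{x\},S_x)$, so $w({\sf X})=\{x\}$ and $f({\sf X})=k+c'(\delta_{H'-x}(S_x))\leq 2k+1$ with $u,v\in X_I$ (Type 1); if $\alpha=\alpha_u$ (symmetric for $\alpha_v$), take ${\sf X}=(S_u \cup \{u\},S_u)$, so $w({\sf X})=\{u\}$, $f({\sf X})=k+k=2k$, $u,v\in X_O$ and $X_I \cap \{u,v\}=\{v\}$ (Type 2). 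These witnesses come for free from the subroutine calls already performed.

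The main subtlety is the case $x\in\{u,v\}$: the splitting off degenerates to a pure capacity decrease and so invokes Lemma \ref{delete} rather than Lemma \ref{split}, and the resulting blocker is of Type 2 with a nonempty wall, rather than the Type 1 blockers produced in the other cases. Verifying that the exact equality $c'(\delta_{H'-u}(S_u))=k$ guaranteed by Lemma \ref{delete} makes $f({\sf X})=2k$ exactly, matching the Type 2 blocking condition, is the only structural point that requires care.
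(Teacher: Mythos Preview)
Your proof is correct and follows essentially the same approach as the paper: decompose the $(2,k)$-connectivity constraint into the $2k$-edge-connectivity constraint and the $k$-edge-connectivity constraints on each $H-x$, handle $x\in\{u,v\}$ via Lemma~\ref{delete} rather than Lemma~\ref{split}, and read off the blocking biset as $(S,S)$, $(S_x\cup\{x\},S_x)$, or $(S_u\cup\{u\},S_u)$ according to which constraint is tight. Your explicit remark that the case $x\in\{u,v\}$ produces a Type~2 blocker (wall $\{u\}$, $f=2k$, $v\in X_I$, $u\in X_O\setminus X_I$) is exactly the observation the paper makes more tersely.
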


\begin{proof}
By definition, $(H',c')=(H,c)_{u,v}^{\alpha}$ where $\alpha$ is the maximum integer such that $(H,c)_{u,v}^{\alpha}$ is $2k$-edge-connected in $V$ and $(H,c)_{u,v}^{\alpha}-x$ is $k$-edge-connected in $V-x$ for all $x\in V$.
We first compute the maximum integer $\alpha '$ such that $(H,c)_{u,v}^{\alpha'}$ is $2k$-edge-connected in $V$. Using Lemma \ref{split}, this can be done in $O(n(m+n \textrm{ }\log\textrm{ } n))$ time.  For $x\in V-\{u,v\}$, 
  we can compute, by Lemma \ref{split}, the maximum integer $\alpha_x$ such that $(H,c)_{u,v}^{\alpha_x}-x=(H-x,c)_{u,v}^{\alpha_x}$ is $k$-edge-connected in $V-x$ in $O(n(m+n \textrm{ }\log\textrm{ } n))$ time. 
We can compute, by Lemma \ref{delete}, the maximum integer $\alpha_u$ such that $(H,c)_{u,v}^{\alpha_u}-u=(H-u,c)_{v}^{\alpha_u}$ is $k$-edge-connected in $V-u$ in $O(n(m+n \textrm{ }\log\textrm{ } n))$ time. We similarly compute $\alpha_v$.
We now can compute $\alpha=\min\{\alpha',\min_{x\in V}\alpha_x\}$. The overall running time is $O(n^2(m+n \textrm{ }\log\textrm{ } n))$.

Now suppose that $c'(su),c'(sv)\neq 0$. 
If $\alpha=\alpha'$, that is $(H',c')=(H,c)_{u,v}^{\alpha'}$, then, by Lemma \ref{split}, a set $S\subsetneq V$ with $u,v \in S$ and $c'(\delta_{H'}(S))\leq 2k+1$ can be found in $O(n(m+n \textrm{ }\log\textrm{ } n))$ time. We obtain that $(S,S)$ is a biset blocking $(su,sv)$ in $(H',c')$. 
If $\alpha=\alpha_x$ for some $x\in V-\{u,v\}$, that is $(H',c')-x=(H-x,c)_{u,v}^{\alpha_x}$, then, by Lemma \ref{split}, a set $S\subsetneq V-x$ with $u,v \in S$ and $c'(\delta_{H'-x}(S))\leq k+1$ can be found in $O(n(m+n \textrm{ }\log\textrm{ } n))$ time. We obtain that $(S\cup \{x\},S)$ is a biset blocking $(su,sv)$ in $(H',c')$. 
Finally, if $\alpha=\alpha_u$ or $\alpha_v,$ say $\alpha_u,$ that is $(H',c')-u=(H-u,c)_{v}^{\alpha_u}$, then, by Lemma \ref{delete}, a set $S\subsetneq V-u$ with $v \in S$ and $c'(\delta_{H'}(S))=k$ can be found in $O(n(m+n \textrm{ }\log\textrm{ } n))$ time. We obtain that $(S\cup \{u\},S)$ is a biset blocking $(su,sv)$ in $(H',c')$.
\qed \end{proof}


\section{Minimal even extensions for $(2,k)$-connectivity}\label{ext}

A minimal even extension for $(2,k)$-connectivity of a capacitated graph $(G=(V,E_0),c_0)$ is obtained by adding a new vertex and edges incident to this vertex so that the obtained capacitated graph becomes $(2,k)$-connected in $V$ and so that the total capacity of the new edges is even and minimal. The importance of minimal even extensions for $(2,k)$-connectivity is due to a theorem from \cite{DgGSz} that shows that minimum augmentations for $(2,k)$-connectivity can be computed from minimal even extensions for $(2,k)$-connectivity by a complete admissible splitting off. We first give a simple algorithm to compute minimal even extensions for $(2,k)$-connectivity and give some basic properties.
We further show a property of minimal even extensions for $(2,k)$-connectivity which is essential to our splitting off algorithms. This then allows us to give a naive algorithm for the $(2,k)$-connectivity augmentation problem which is slower than the algorithm which is the main result of this article and is given later.

We first introduce the algorithm for computing a minimal even extension for $(2,k)$-connectivity. In order to avoid a technical definition whose details are not essential to this article, Algorithm \ref{algo1} will also serve as a definition for minimal even extensions for $(2,k)$-connectivity. Algorithm \ref{algo1} takes a capacitated graph $(G=(V,E_0),c_0)$ as input and adds a new vertex $s$ as well as edges of sufficiently high capacity between $s$ and all other vertices to make the capacitated graph $(2,k)$-connected in $V$. It then reduces these capacities in a greedy way as much as possible while maintaining $(2,k)$-connectivity in $V$. Finally, if the degree of $s$ is odd, it augments the capacity of a certain chosen edge by $1$. 

\begin{algorithm}
\caption {Minimal even extensions for $(2,k)$-connectivity}
\begin{algorithm2e}[H]\label{algo1}
\SetKwInOut{Input}{Input} 
\Input {A capacitated graph $(G=(V,E_0),c_0)$, an integer $k \geq 2$.}
\SetKwInOut{Output}{Output} 
\Output {A minimal even extension for $(2,k)$-connectivity of $(G,c_0)$.}
Create $(H,c)$ by adding a vertex $s$ to $V$ and adding an edge of capacity $2k$ between $s$ and every $v \in V$\;
Let $(v_1,\ldots,v_n)$ be an arbitrary ordering of the vertices of $V$\;
\For{$i=1,\ldots,n$}
	{$(H,c)=(H,c)^{max}_{v_i}$\;}
\If{$c(\delta_{H}(s))$ is odd}
	{choose the maximum $i^*$ such that $c(sv_{i^*})$ is odd\;
	$c(sv_{i^*})=c(sv_{i^*})+1$\;}
Return $(H,c)$ \;
\end{algorithm2e}
\end{algorithm}

Given a capacitated graph $(G,c_0)$, a capacitated graph $(H,c)$ which is obtained by applying Algorithm \ref{algo1} to $(G,c_0)$ is called a {\it minimal even extension for $(2,k)$-connectivity} of $(G,c_0)$.

\begin{Proposition}\label{minevext}
Given a capacitated graph $(G,c_0)$, a minimal even extension for $(2,k)$-connectivity of $(G,c_0)$ can be computed in $O(n^3(m+n \textrm{ }\log\textrm{ } n))$ time.
\end{Proposition}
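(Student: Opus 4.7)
The plan is to analyze Algorithm \ref{algo1} step by step, verifying both correctness (that it outputs a capacitated graph satisfying the definition of a minimal even extension) and the claimed running time.

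First I would check that the invariant ``$(H,c)$ is $(2,k)$-connected in $V$'' is maintained throughout the algorithm. For the initial graph created in line 1, any nontrivial biset ${\sf X}$ on $V$ has $X_I \neq \emptyset$, so picking any $v \in X_I$, the edge $sv$ of capacity $2k$ contributes to the cut defining $f({\sf X})$, yielding $f({\sf X}) \geq 2k$ immediately. In the main loop, each assignment $(H,c) = (H,c)^{max}_{v_i}$ preserves $(2,k)$-connectivity in $V$ by definition of $(H,c)^{max}_{v_i}$, so the invariant propagates inductively. Finally, the increment in line 7 only increases edge capacities, which can never decrease $f$ on any biset, so the invariant is preserved there too.

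Next I would address the ``if'' branch. The existence of the index $i^*$ is immediate: if $c(\delta_H(s))$ is odd, then $\sum_{i=1}^n c(sv_i)$ is odd, so at least one $c(sv_i)$ is odd. Incrementing $c(sv_{i^*})$ by $1$ makes $c(\delta_H(s))$ even as required and, as noted above, does not violate $(2,k)$-connectivity in $V$. The output thus satisfies all properties required by the definition of a minimal even extension (which, per the paper, is exactly what Algorithm \ref{algo1} produces).

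For the running time, I would partition the work as follows. Line 1 takes $O(n)$ time. The main loop runs $n$ times; by Lemma \ref{bigdelete}, each computation of $(H,c)^{max}_{v_i}$ runs in $O(n^2(m + n \log n))$ time, so the loop totals $O(n^3(m + n \log n))$. Lines 5--7 scan the at most $n$ edges incident to $s$ in $O(n)$ time. Summing these bounds gives the claimed $O(n^3(m + n \log n))$ overall running time.

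There is no genuine obstacle here; the only thing requiring a small amount of care is making sure that each step is well-defined, in particular that the argument of Lemma \ref{bigdelete} is indeed $(2,k)$-connected in $V$ at every iteration, which is exactly the invariant verified in the first paragraph.
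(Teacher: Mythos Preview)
Your proposal is correct and follows essentially the same approach as the paper: invoke Lemma \ref{bigdelete} for each of the $n$ iterations of the loop and observe that the remaining steps are cheap. Your argument is somewhat more detailed than the paper's, which simply notes that a minimal even extension is \emph{by definition} the output of Algorithm \ref{algo1} and then goes straight to the running time; your additional verification that $(H,c)$ remains $(2,k)$-connected in $V$ throughout is not needed for correctness (that is definitional) but is implicitly required to justify the hypothesis of Lemma \ref{bigdelete} at each call, so it is a welcome clarification rather than a different route.
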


\begin{proof}
By definition, a minimal even extension for $(2,k)$-connectivity can be computed by Algorithm \ref{algo1}. It follows from Lemma \ref{bigdelete} that line 4  can be executed in $O(n^2(m+n \textrm{ }\log\textrm{ } n))$ time. As line 4 is executed $n$ times and the rest of Algorithm \ref{algo1} can be executed efficiently, Algorithm \ref{algo1} runs in $O(n^3(m+n \textrm{ }\log\textrm{ } n))$ time.
\qed \end{proof}

We next collect some basic properties of minimal even extensions for $(2,k)$-connectivity.

\begin{Proposition}\label{minevext}
Let $(H=(V+s,E),c)$ be a minimal even extension for $(2,k)$-connectivity of a capacitated graph $(G=(V,E_0),c_0)$. Then the following hold:
\begin{enumerate}[(a)]
\item $(H,c)$ is $(2,k)$-connected in $V$,
\item $c(\delta_H(s))$ is even,
\item $c(sv)$ is even for all $v \in U_{(H,c)}$,
\item $(H,c)_v^2$ is not $(2,k)$-connected in $V$ for any $v \in V$.
\end{enumerate}
\end{Proposition}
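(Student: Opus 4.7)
The plan is to verify each of (a)--(d) by tracing Algorithm~\ref{algo1}. Write $(H_i,c_i)$ for the capacitated graph at the end of iteration $i$ of the for-loop, $(H_n,c_n)$ for the graph just before line~5, and $(H,c)=(H^*,c^*)$ for the final output. Property (a) is immediate: the starting graph $(H_0,c_0)$ is $(2,k)$-connected in $V$ because any biset ${\sf X}$ on $V$ nontrivial with respect to $V$ has some $v\in X_I$, and the edge $sv$ of capacity $2k$ lies in the cut defining $f({\sf X})$, so $f({\sf X})\ge 2k$; each $(H,c)^{max}_{v_i}$ preserves $(2,k)$-connectivity in $V$ by definition, and the $+1$ step only increases a capacity. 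Property (b) is explicit in lines~5--7.

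For (c) and (d), the key tool is a loop invariant I would prove by induction on $i$: for every $j\le i$ with $c_i(sv_j)\ge 1$, there exists a biset ${\sf Y}^{(j)}$ on $V$ with $v_j\in Y^{(j)}_I$ and $f({\sf Y}^{(j)})=2k$ in $(H_i,c_i)$. The maximality at iteration $j$ produces such a tight biset in $(H_j,c_j)$, and subsequent iterations only decrease capacities while $(H_i,c_i)$ remains $(2,k)$-connected in $V$, so $f({\sf Y}^{(j)})$ stays equal to $2k$. Property (d) then follows quickly: for $v=v_{i^*}$ we have $(H^*,c^*)_{v_{i^*}}^2=(H_n,c_n)_{v_{i^*}}^1$, which is not $(2,k)$-connected in $V$ by the invariant; for $v\ne v_{i^*}$ with $c^*(sv)\ge 2$, the tight biset ${\sf Y}^{(v)}$ gives $f({\sf Y}^{(v)})\le 2k-2+1<2k$ in $(H^*,c^*)_v^2$, where the $+1$ accounts for the possible increase of the capacity of $sv_{i^*}$.

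The main obstacle is property (c); I would argue by contrapositive that if $c^*(sv)$ is odd then $v\notin U_{(H^*,c^*)}$. The cases $v=v_{i^*}$ and $v=v_j$ with $j>i^*$ are vacuous because $c^*(sv)$ is then even, either by construction or by the maximality of $i^*$. The delicate case is $v=v_j$ with $j<i^*$ and $c_n(sv_j)$ odd. Here I would show that the witness biset ${\sf Y}^{(j)}$ satisfies $v_{i^*}\notin Y^{(j)}_I$ by exploiting that at iteration $j$ the vertex $v_{i^*}$ has not yet been processed, so $c_j(sv_{i^*})=2k$. If $v_{i^*}\in Y^{(j)}_I$ then both edges $sv_j$ and $sv_{i^*}$ would lie in the cut defining $f({\sf Y}^{(j)})$, yielding
\[
2k=f({\sf Y}^{(j)})\ge c_j(sv_j)+c_j(sv_{i^*})=(2k-\alpha_j)+2k,
\]
where $\alpha_j$ is the reduction performed at iteration $j$; this forces $\alpha_j=2k$, hence $c_n(sv_j)=0$, contradicting $c_n(sv_j)$ being odd. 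With $v_{i^*}\notin Y^{(j)}_I$, the biset ${\sf Y}^{(j)}$ still satisfies $f=2k$ in $(H^*,c^*)$, hence $f=2k-1<2k$ in $(H^*,c^*)_{v_j}^1$, so $v_j\notin U_{(H^*,c^*)}$, as required. The maximality of the chosen index $i^*$ is exactly what makes this argument go through, since a different choice could allow $c_j(sv_{i^*})<2k$ and break the key inequality.
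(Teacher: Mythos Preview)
Your proof is correct and follows essentially the same route as the paper. Both arguments derive (a) and (b) directly from the algorithm, and for (c) and (d) both exploit the tight biset produced by the maximality at iteration $j$; the crucial step for (c)—that $v_{i^*}\notin Y^{(j)}_I$ for $j<i^*$ because at that moment $c_j(sv_{i^*})=2k$ already saturates the bound $f=2k$—is exactly the paper's argument, and your explicit loop-invariant formulation is just a slightly more detailed packaging of it.
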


\begin{proof}
We obtain $(a)$ as an immediate consequence of the construction of Algorithm \ref{algo1}.

If the if-condition in line 5 is not satisfied, $c(\delta_H(s))$ is even and remains unchanged in the rest of the algorithm. Otherwise, $c(\delta_H(s))$ is odd and is augmented by $1$ in line 7. This yields $(b)$.

 We denote by $(H_i,c_i)$ the capacitated graph defined in line 4 in iteration $i$. Since line 4 is executed, for all $i \in \{1,\ldots,n\}$ with $c_i(sv_i)\ge 1$, there exists a biset ${\sf X}^i$ such that $v_i\in X^i_I$ and $f_{(H_i,c_i)}({\sf X}^i)=2k.$
If the if-condition in line 5 is not satisfied, then $f_{(H,c)}({\sf X}^{i})\leq f_{(H^{i},c^{i})}({\sf X}^{i})=2k$ for all $i \in \{1,\ldots,n\}$  with $c_i(sv_i)\ge 1$ and so $U_{(H,c)}=\emptyset$. If the if-condition in line 5 is satisfied, then $i^*$ is defined in line 6. For all $i \in \{1,\ldots,i^*-1\}$  with $c_i(sv_i)\ge 1$, since $c_i(sv_i)\ge 1, c_i(sv_{i^*})=2k$ and $f_{(H_i,c_i)}({\sf X}^i)=2k,$ we have $v_{i^*}\notin X^i_I.$ This yields $f_{(H,c)}({\sf X}^{i})\leq f_{(H^{i},c^{i})}({\sf X}^{i})=2k$, so $v_i \notin U_{(H,c)}$. For all $i\in \{i^*,\ldots,n\}$, $c(sv_i)$ is even after the execution of line 7. This proves $(c)$.

For any $v_i \in V$ with $c(sv_i)\ge 1$, we have $f_{(H,c)}({\sf X}^{i})\leq f_{(H^{i},c^{i})}({\sf X}^{i})+1=2k+1$. This proves $(d)$.
\qed \end{proof}

We now give the following theorem which is the reason for us considering minimal even extensions for $(2,k)$-connectivity. Its proof can be found in \cite{DgGSz}. It shows that minimum augmentations for $(2,k)$-connectivity can be computed from minimal even extensions for $(2,k)$-connectivity if we can find a complete admissible splitting off at $s$.

\begin{Theorem}\label{twice}
Let $(G=(V,E_0),c_0)$ be a capacitated graph, $(H=(V+s,E),c)$ a minimal even extension for $(2,k)$-connectivity of $(G,c_0)$ and let $(H',c')$ be obtained from $(H,c)$ by a complete admissible splitting off. Then $(H',c')-s$ is a minimum $(2,k)$-connected augmentation of $(G,c)$. 
\end{Theorem}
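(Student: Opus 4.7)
The plan is to prove the theorem in three parts: feasibility, counting the added capacity, and establishing a matching lower bound.

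First I would verify that $(H',c')-s$ is a $(2,k)$-connected augmentation of $(G,c_0)$. By definition, a splitting off only transfers capacity from edges incident to $s$ to edges inside $V$ and never decreases the capacity of any edge inside $V$, so $c_0(e)\leq c'(e)$ for every $e\in E_0$ and $E_0\subseteq E'$. Since every splitting off in the complete sequence is admissible, $(H',c')$ is $(2,k)$-connected in $V$; since $s$ is isolated in $(H',c')$ by definition of a complete splitting off, $f_{(H',c')-s}({\sf X})=f_{(H',c')}({\sf X})\geq 2k$ holds for every biset ${\sf X}$ nontrivial with respect to $V$, so $(H',c')-s$ is $(2,k)$-connected.

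Second, each splitting off of multiplicity one removes two units of capacity from $\delta(s)$ and adds one unit of capacity to an edge inside $V$. Summing over the complete splitting off, the total added capacity is exactly $\tfrac12 c(\delta_H(s))$, so $c'(E'\setminus \delta_{H'}(s))-c_0(E_0)=\tfrac12 c(\delta_H(s))$.

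Third, I would prove the matching lower bound: every $(2,k)$-connected augmentation $(G^*=(V,E^*),c^*)$ of $(G,c_0)$ satisfies $c^*(E^*)-c_0(E_0)\geq \tfrac12 c(\delta_H(s))$. My approach is to associate to $(G^*,c^*)$ an auxiliary even extension $(H^*,c^*)$ of $(G,c_0)$ by subdividing each augmentation edge $uv$ of added capacity $\alpha$ into two edges $su,sv$ of capacity $\alpha$ each. A short case analysis on the position of $u,v$ relative to $X_I$, $w({\sf X})$ and $V-X_O$ shows that $f$ cannot decrease under this subdivision, so $(H^*,c^*)$ is $(2,k)$-connected in $V$ with $c^*(\delta_{H^*}(s))=2(c^*(E^*)-c_0(E_0))$. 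The claim then reduces to the inequality $c^*(\delta_{H^*}(s))\geq c(\delta_H(s))$ for every even extension $(H^*,c^*)$ that is $(2,k)$-connected in $V$.

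I expect the hard part to be precisely this last inequality, which is the min-max duality side of the $(2,k)$-connectivity augmentation problem from \cite{DgGSz}. Using Proposition \ref{minevext}(d), for every $v\in V$ with $c(sv)\geq 2$ there is a tight biset ${\sf X}^v$ with $v\in X^v_I$ and $f_{(H,c)}({\sf X}^v)\leq 2k+1$. Uncrossing these bisets via Proposition \ref{ftyig} and carefully accounting for the parity correction from line 7 of Algorithm \ref{algo1} (the $+1$ at $v_{i^*}$), I would extract a subpartition-type dual certificate forcing the required total capacity on $\delta(s)$ in any extension that is $(2,k)$-connected in $V$. Matching this dual bound with $c(\delta_H(s))$ then completes the proof.
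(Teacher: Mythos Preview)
The paper does not give its own proof of this theorem; it simply cites \cite{DgGSz}. So there is no proof here to compare against, and your proposal is already more than the paper offers.

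Your first two steps are correct and complete: admissibility of each splitting off guarantees $(2,k)$-connectivity in $V$ of $(H',c')$, and since $s$ becomes isolated this transfers to $(H',c')-s$; the capacity count $\tfrac12 c(\delta_H(s))$ is immediate. The reduction in your third step is also sound: subdividing each new edge of an augmentation $(G^*,c^*)$ through $s$ yields an extension $(H^*,c^*)$ that is $(2,k)$-connected in $V$ with $c^*(\delta_{H^*}(s))=2(c^*(E^*)-c_0(E_0))$, so optimality reduces to showing that Algorithm~\ref{algo1} produces an extension with minimum $c(\delta_H(s))$.

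The genuine gap is exactly where you flag it. The greedy construction in Algorithm~\ref{algo1} processes vertices in an arbitrary order and reduces capacities one at a time; it is not at all obvious that this yields the global minimum, and the paper uses Algorithm~\ref{algo1} as the \emph{definition} of a minimal even extension rather than proving minimality. Your sketch (take tight bisets ${\sf X}^v$ from Proposition~\ref{minevext}(d), uncross via Proposition~\ref{ftyig}, handle the parity bump at $v_{i^*}$) names the right ingredients, but the actual uncrossing argument for bisets with walls, the extraction of a subpartition-type family whose deficiencies sum to $c(\delta_H(s))$, and the parity accounting are the substantive content of the min-max theorem in \cite{DgGSz}. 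As written, this part is a plan, not a proof; completing it would amount to reproducing a nontrivial portion of \cite{DgGSz}.
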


We now prove an important result that shows that when finding a complete admissible splitting off, we do not need to worry about obstacles.

\begin{Lemma}\label{antiobstacle}
Let $(H=(V+s,E),c)$ be a minimal even extension for $(2,k)$-connectivity of a capacitated graph $(G,c_0)$ and let $(H_1,c_1)$ be obtained from $(H,c)$ by a series of admissible  splitting offs. Then $(H_1,c_1)$ contains no obstacle.
\end{Lemma}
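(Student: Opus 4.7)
My approach will be a proof by contradiction. Assume that $(H_1,c_1)$ contains an obstacle with special vertex $t$ and collection $\mathcal{B}$. The plan is to construct, inside $(H_1,c_1)$, a horrifying biset whose outer set contains all of $N_{H_1}(s)$, which will contradict Lemma~\ref{dehors}.

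First I would establish two monotonicity/squeeze facts linking $(H_1,c_1)$ back to the minimal even extension $(H,c)$. A direct case analysis of how a single splitting off of $(su,sv)$ changes $f({\sf Y})$ for an arbitrary biset ${\sf Y}$ on $V$ shows that this change is always nonpositive, so $f_{(H,c)}({\sf Y}) \geq f_{(H_1,c_1)}({\sf Y})$ throughout the sequence. Combined with $t \in U_{(H_1,c_1)}$, a squeeze argument (any biset with $t$ in its inner set and $f_{(H,c)} = 2k$ would force $f_{(H_1,c_1)} = 2k$, hence $f_{(H_1,c_1)_t^1} = 2k-1$, a contradiction) yields $t \in U_{(H,c)}$. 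Then Proposition~\ref{minevext}(d) supplies a biset ${\sf Z}$ with $t \in Z_I$ and $f_{(H,c)}({\sf Z}) \leq 2k+1$, and $t \in U_{(H,c)}$ forces $f_{(H,c)}({\sf Z}) = 2k+1$, with $|w({\sf Z})| \leq 1$ by the remark on horrifying bisets. Repeating the squeeze in $(H_1,c_1)$ gives $f_{(H_1,c_1)}({\sf Z}) = 2k+1$.

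Next I will iteratively enlarge ${\sf Z}$ using Lemma~\ref{new34} and the bisets in $\mathcal{B}$. Set ${\sf Z}^{(0)} := {\sf Z}$. While $N_{H_1}(s) \not\subseteq Z^{(i)}_O$, pick $v_i \in N_{H_1}(s) - Z^{(i)}_O$; since $t \in Z^{(i)}_I \subseteq Z^{(i)}_O$ we have $v_i \neq t$, so by the covering condition there is a unique ${\sf B}^{v_i}\in\mathcal{B}$ with $v_i \in B^{v_i}_I$. The biset ${\sf B}^{v_i}$ blocks $(st,sv_i)$ via case (ii) of the blocking definition, so Lemma~\ref{new34} applied with ${\sf X} = {\sf Z}^{(i)}$, $u=t$, $v=v_i$, ${\sf Y}={\sf B}^{v_i}$ forces either ${\sf Z}^{(i+1)} := {\sf Z}^{(i)} \cup {\sf B}^{v_i}$ to be horrifying or ${\sf Z}^{(i)}$ and ${\sf B}^{v_i}$ to have the same wall of size $1$. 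The latter is impossible since $t \in Z^{(i)}_I$ keeps $t$ out of $w({\sf Z}^{(i)})$ while $w({\sf B}^{v_i}) = \{t\}$; therefore ${\sf Z}^{(i+1)}$ is horrifying. As $|Z^{(i+1)}_I| > |Z^{(i)}_I|$, the process terminates, yielding a horrifying biset ${\sf Z}^{(\star)}$ with $N_{H_1}(s) \subseteq Z^{(\star)}_O$; but Lemma~\ref{dehors} applied to ${\sf Z}^{(\star)}$ in $(H_1,c_1)$ forces $N_{H_1}(s) - Z^{(\star)}_O \neq \emptyset$, the desired contradiction.

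The main difficulty is validating the base case: Lemma~\ref{new34} requires ${\sf Z}^{(0)} = {\sf Z}$ to already be horrifying, which amounts to $|Z_I \cap N_{H_1}(s)| \geq 2$ so that some pair $(su,sv)$ with $u\neq v$ is blocked via case (i). In the boundary situation $Z_I \cap N_{H_1}(s) = \{t\}$ one must first merge ${\sf Z}$ with some horrifying ${\sf B}^{*}\in\mathcal{B}$ (which exists because covering together with $c_1(\delta_{H_1}(s))$ even and $c_1(st)$ odd forces $|N_{H_1}(s)| \geq 2$). Submodularity of $f$ together with the $(2,k)$-connectivity lower bound of $2k$ on ${\sf Z}\cap{\sf B}^{*}$, when $Z_I \cap B^{*}_I \neq \emptyset$, gives $f({\sf Z}\cup{\sf B}^{*}) \leq 2k+1$; the remaining degenerate subcase $Z_O \cap B^{*}_O = \{t\}$ must then be treated by a separate argument exploiting the specific parity information $c_1(st)$ odd that characterises the obstacle.
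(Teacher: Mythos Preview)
Your iterative enlargement via Lemma~\ref{new34} is sound once a horrifying biset containing $t$ in its inner set is available, but the base case you flag as ``the main difficulty'' is a genuine gap, not just a technicality. In the boundary situation $Z_I\cap N_{H_1}(s)=\{t\}$ with $Z_O\cap B^{*}_O=\{t\}$, the intersection ${\sf Z}\cap{\sf B}^{*}$ has empty inner set and wall $\{t\}$, so $f({\sf Z}\cap{\sf B}^{*})=k$; the submodular inequality then only gives $f({\sf Z}\cup{\sf B}^{*})\le (2k+1)+2k-k=3k+1$, which is useless for $k\ge 2$. Your proposed rescue via ``the specific parity information $c_1(st)$ odd'' is not an argument: no parity constraint on $f$ is available here, and nothing prevents this degenerate configuration from arising for every ${\sf B}^{*}\in\mathcal{B}$ simultaneously. (A minor additional slip: you justify $|w({\sf Z})|\le 1$ by the remark on horrifying bisets, which is circular; it does follow directly from $c(st)\ge 2$, but you should say so.) Even in the non-degenerate subcase you do not verify that ${\sf Z}\cup{\sf B}^{*}$ is nontrivial, which is needed for it to be horrifying.

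The paper's proof avoids this difficulty entirely by taking a completely different route. From $t\in U_{(H,c)}$ and Proposition~\ref{minevext}(c) it deduces that $c(st)$ is even while $c_1(st)$ is odd, so some splitting off $(st,sv)$ with $v\neq t$ occurred; undoing one such splitting off yields a graph $(H_2,c_2)$ with $c_2(st)$ even, and a short argument shows $(H_2,c_2)$ has no obstacle. Theorem~\ref{splitcomplete} then provides two admissible splittings $(st,sx),(st,sy)$ in $(H_2,c_2)$ with $x,y\notin\{t,v\}$, hence $x,y\in\bigcup_{{\sf B}\in\mathcal{B}}B_I$. A direct count of $\sum_{{\sf B}}c(\delta_{H-t}(B_I))$ across $(H_1,c_1),(H_2,c_2),(H_3,c_3)$ now yields $|\mathcal{B}|k+2\le |\mathcal{B}|k+1$. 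This argument never needs to manufacture a horrifying biset with $t$ in its inner set, and in particular never confronts the degenerate configuration that stalls your approach.
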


\begin{proof}
Suppose that $(H_1,c_1)$ contains an obstacle $\mathcal{B}$ with special vertex $t$. By  \eqref{propt}, $c_1(st)$ is odd and $t\in U_{(H_1,c_1)}\subseteq U_{(H,c)}$. It follows by Proposition \ref{minevext}$(c)$ that $c(st)$ is even, so $st$ was split off with an edge $sv$. By Proposition \ref{minevext}$(d)$, we have $v \neq t$. Let $(H_2,c_2)$ be the capacitated graph such that $(H_1,c_1)=(H_2,c_2)_{t,v}^1$.

\begin{Claim}\label{noob}
$(H_2,c_2)$  contains no obstacle.
\end{Claim}

\begin{proof} Suppose otherwise, so $(H_2,c_2)$ contains an obstacle $\mathcal{B}'$. By \eqref{propt}, the special vertex of $\mathcal{B}'$ cannot be $t$ because $c_2(st)=c_1(st)+1$ is even. It follows by \eqref{covering} that $t \in B'_I$ for some ${\sf B'}\in \mathcal{B}'$. Now  \eqref{propB} yields $f_{(H_2,c_2)}({\sf B}')=2k$ which contradicts that by \eqref{propt}, $t \in U_{(H_1,c_1)}\subseteq U_{(H_2,c_2)}$.
\qed 
\end{proof}

By Claim \ref{noob} and Theorem \ref{splitcomplete}, $(H_2,c_2)$  has a complete admissible splitting off. In particular, as $c_2(st)\geq 2$, there exist $x,y\in N_{H_2}(s)$ (possibly $x=y$) such that  $(H_3,c_3)=((H_2,c_2)_{t,x}^1)_{t,y}^1$ is $(2,k)$-connected in $V.$ 
By Proposition \ref{minevext}$(d)$, $(H,c)_{t,t}^1=(H,c)_t^2$ is not $(2,k)$-connected in $V$ and hence neither is $(H_2,c_2)_{t,t}^1$, so $x,y \neq t$.
Obviously $x,y \neq v$, so, by \eqref{covering}, $x,y \in \bigcup_{{\sf B}\in \mathcal{B}} B_I$.  Then, by \eqref{propB}, we have 
$2+|\mathcal{B}|k\le 2+\sum_{{\sf B}\in \mathcal{B}}c_3(\delta_{H_3-t}(B_I))=\sum_{{\sf B}\in \mathcal{B}}c_2(\delta_{H_2-t}(B_I)) = 1+\sum_{{\sf B}\in \mathcal{B}}c_1(\delta_{H_1-t}(B_I))=1+|\mathcal{B}|k$, a contradiction.
\qed \end{proof}

We are now ready to give a first naive algorithm for finding a complete admissible splitting off of minimal even extensions for $(2,k)$-connectivity.
\medskip

\begin{algorithm}
\caption {Naive splitting off}
\begin{algorithm2e}[H]\label{algox}
\SetKwInOut{Input}{Input} 
\Input {A minimal even extension for $(2,k)$-connectivity $(H=(V+s,E),c)$ of a capacitated graph $(G,c_0).$}
\SetKwInOut{Output}{Output} 
\Output {A minimum $(2,k)$-connected augmentation of $(G,c_0).$}

\For{$u\neq v \in N_H(s)$}
	{$(H,c)=(H,c)^{max}_{u,v}$\;}
Return $(H,c)-s$ \;
\end{algorithm2e}
\end{algorithm}
\medskip

By Lemma \ref{antiobstacle}, no obstacle is created during the execution of Algorithm \ref{algox} and so, by Theorems \ref{splitcomplete} and \ref{twice}, the output of Algorithm \ref{algox} is a minimum $(2,k)$-connected augmentation of $(G,c_0).$ As line 2 is executed at most $n^2$ times and by Lemma \ref{bigsplit}, Algorithm \ref{algox} runs in $O(n^4(m+n \log n))$ time. Together with Algorithm \ref{algo1}, this yields an $O(n^4(m+n \log n))$ time algorithm for the $(2,k)$-connectivity augmentation problem.
\section{A fast splitting off algorithm}\label{algo}
This section is dedicated to refining Algorithm \ref{algox} in order to improve its running time from $O(n^4(m+n \log n))$ time to $O(n^3(m+n \log n))$ time. Together with Algorithm \ref{algo1} and Theorem \ref{twice}, this yields an $O(n^3(m+n \log n))$ time algorithm for the $(2,k)$-connectivity augmentation problem.

While Algorithm \ref{algox} executes maximal splitting offs for all pairs of edges incident to $s$ and therefore executes maximal splitting offs for a number of pairs which is quadratic in $n$ before terminating, the refined version, Algorithm \ref{algo}, carefully chooses the pairs to be split off. This allows us to terminate after a number of maximal splitting offs which is linear in $n$.

 In order to achieve this, Algorithm \ref{algo} maintains not only a capacitated graph that is obtained from the minimal even extension for $(2,k)$-connectivity that is its input by a series of splitting offs. It also stores the information obtained from the fact that certain pairs are not admissible in the form of a biset ${\sf X}$. 
If two edges incident to $s$ both have their second endvertex in the inner set of ${\sf X}$, their splitting off is not admissible. 
 The maintenance of ${\sf X}$ therefore allows us to avoid attempts of splitting offs of pairs which are known to be nonadmissible.

During each iteration of the algorithm, we execute one or two maximal splitting offs. If none of these maximal splitting offs delete an edge incident to $s$, we modify ${\sf X}$. The number of neighbors of $s$ which are not covered by $X_O$ never increases. Further, after a small constant number of iterations of our algorithm, either an edge incident to $s$ is deleted or the number of neighbors of $s$ not covered by $X_O$ decreases. This allows us to obtain the desired running time.

In the first part of this section, we show a key lemma that is needed to modify ${\sf X}$ in a favorable way. After, we describe the algorithm in the form of a pseudocode. Finally, we prove the correctness of the algorithm and analyze its running time.  

\subsection{Key lemma}\label{new}

This part gives a result that allows us to modify the biset ${\sf X}$.

\begin{Lemma}\label{vide}
Let $(H=(V+s,E),c)$ be a capacitated graph that is $(2,k)$-connected in $V$ for some $k \geq 2$  and has a complete admissible splitting off. Let ${\sf X}$  be a horrifying biset, $u \in X_I \cap N_H(s), v \in N_H(s)-X_O,$ $\Y$ a biset blocking $(su,sv)$, $z \in (X_I-Y_I)\cap N_H(s)$ and suppose that $\X \cup \Y$ is not horrifying. Let $\Z$ be a biset blocking $(sv,sz)$. Then $\X \cup \Y \cup \Z$ is horrifying.
\end{Lemma}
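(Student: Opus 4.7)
The plan is to apply Lemma~\ref{new34} iteratively and perform casework on the wall structure. Since $(H,c)$ admits a complete admissible splitting off, $c(\delta_H(s))$ is even, so Lemma~\ref{new34} applies. First, I apply it to the horrifying biset $\X$, the pair $(su,sv)$ (valid since $u \in X_I \cap N_H(s)$ and $v \in N_H(s) - X_I$, because $v \notin X_O \supseteq X_I$), and the blocking biset $\Y$. Because $\X \cup \Y$ is not horrifying by hypothesis, the lemma yields $w(\X) = w(\Y) = \{t\}$ for some $t \in V$. From this I deduce $t \neq u, v, z$; that $u, v \in Y_I$ (both lie in $Y_O = Y_I \cup \{t\}$ and differ from $t$); that $f(\X) \le 2k+1$; and consequently that $\X$ itself blocks $(su, sz)$, since $u, z \in X_I \cap N_H(s)$ are distinct ($u \in Y_I$ but $z \notin Y_I$).

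Next, I apply Lemma~\ref{new34} to the horrifying biset $\Y$, the pair $(sv, sz)$ (valid since $v \in Y_I \cap N_H(s)$ and $z \in N_H(s) - Y_I$), and the blocking biset $\Z$. This gives two cases: either (I) $\Y \cup \Z$ is horrifying, or (II) $w(\Z) = \{t\}$. In Case~(I), I apply Lemma~\ref{new34} a third time, with horrifying biset $\Y \cup \Z$, pair $(su, sz)$, and blocking biset $\X$. The pair is valid provided $z \notin Z_I$; the conclusion is then either $\X \cup \Y \cup \Z$ is horrifying (and we are done) or $w(\X) = w(\Y \cup \Z) = \{t\}$, which reduces to Case~(II). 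Both Case~(II) and the remaining subcase $z \in Z_I$ of Case~(I) lead to the situation in which $\X \cup \Y \cup \Z$ has wall $\{t\}$, is nontrivial, and contains the distinct neighbors $u, z$ of $s$ in its inner set; it then suffices to establish $f(\X \cup \Y \cup \Z) \le 2k + 1$ to conclude that $\X \cup \Y \cup \Z$ blocks $(su, sz)$ and is horrifying.

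The main obstacle is establishing this $f$-bound, as generic submodularity of $f$ only yields $f(\X \cup \Y \cup \Z) \le 2k + 3$. The plan to sharpen this by $2$ is to exploit the tightness forced by the hypothesis: the fact that $\X \cup \Y$ is not horrifying, yet has two distinct neighbors of $s$ in its inner set and wall $\{t\}$, pins down $f(\X \cup \Y) = 2k + 2$ with equality in the submodular inequality for $\X, \Y$; this in particular forbids crossing edges between $X_I - Y_O$ and $Y_I - X_O$, so for instance $c(vz) = 0$. Combining this structural tightness with the no-obstacle consequence of the complete admissible splitting off hypothesis (via Theorem~\ref{splitcomplete}) should eliminate the remaining slack and yield the desired $f(\X \cup \Y \cup \Z) \le 2k + 1$.
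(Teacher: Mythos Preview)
Your opening is correct and matches the paper: Lemma~\ref{new34} forces $w(\X)=w(\Y)=\{t\}$ and $u,v\in Y_I$. Your subsequent case structure (splitting on whether $\Y\cup\Z$ is horrifying or $w(\Z)=\{t\}$) is a reasonable variant of the paper's split on $w(\Z)=\{t\}$ versus $w(\Z)\neq\{t\}$.

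However, the proposal has a genuine gap at exactly the hard point. In the situation where everything has wall $\{t\}$, you assert that it suffices to prove $f(\X\cup\Y\cup\Z)\le 2k+1$ and then offer only a heuristic (``tightness plus no obstacle via Theorem~\ref{splitcomplete} should eliminate the remaining slack''). This is the crux of the lemma, and the sketch is not a proof. Two specific problems: first, your claim $f(\X\cup\Y)=2k+2$ already presupposes that $\X\cup\Y$ is nontrivial (otherwise ``not horrifying'' gives no lower bound on $f$), and ruling out $(\X\cup\Y)_O=V$ is itself nontrivial---the paper needs the complete admissible splitting off hypothesis \emph{here}, applied directly (not through Theorem~\ref{splitcomplete}) to produce admissible partners for $v$ and $z$ outside $X_I\cup Y_I$ (this is the paper's Claim~\ref{notv}). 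Second, even granting $f(\X\cup\Y)=2k+2$ and $c(vz)=0$, going from $f(\X\cup\Y\cup\Z)\le 2k+3$ down to $2k+1$ requires real combinatorics that you have not supplied.

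The paper avoids trying to bound $f(\X\cup\Y\cup\Z)$ directly in this case. Instead it proves the stronger statement $X_I\cup Y_I\subseteq Z_I$, so that $\X\cup\Y\cup\Z=\Z$ is horrifying on the nose. This is done by a careful three-claim argument working in $H-t$: one shows each of $X_I,Y_I,Z_I$ is contained in the union of the other two, then that the pairwise ``crossing degrees'' $\overline{d'}$ are at most $1$, and then (using the admissible splitting off to certify nontriviality of $\X\cup\Y$) that $d'(X_I\cup Y_I)\ge k+2$, which together contradict $k\ge 2$.

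A secondary issue: your subcase $z\in Z_I$ of Case~(I) does not obviously give $w(\X\cup\Y\cup\Z)=\{t\}$; you would still need to control $w(\Z)$ (for example via the fact that $\Y\cup\Z$ is horrifying and hence has wall of size at most one, together with $t\notin Z_I$), and you have not argued this.
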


\begin{proof} 
By Lemma \ref{new34}, $w(\X)=w(\Y)=\{p\}$ for some vertex $p\in V.$ Since $\{u\},\{v\} \neq w(\X)=w(\Y)$ and $\Y$ blocks $(su,sv)$, we have $u,v\in Y_I.$
Let {\boldmath$(H',c')$} $=(H,c)-p$ and {\boldmath$d'$}$(S):=c'(\delta_{H'}(S))$ for  $S\subseteq V-p$. For $S_1,S_2 \subseteq V-p$, we use {\boldmath $\overline{d'}(S_1,S_2)$} for $c'(\delta_{H'}(S_1\cap S_2, (V+s-p)-(S_1\cup S_2)))$. Observe that $(H',c')$ is $k$-edge-connected in $V$ since $(H,c)$ is $(2,k)$-connected in $V$. This yields that $d'(S)\geq k$ for any $\emptyset \neq S\subset V-p$.

We  distinguish two cases depending on where the wall of $\Z$ is located.
\begin{case}
$w(\Z)=\{p\}$.
\end{case}

We show that in this case $\X\cup\Y\subseteq\Z$ and hence  $\X \cup \Y \cup \Z=\Z$ is horrifying.
For the sake of a contradiction, suppose that $(X_I\cup Y_I)-Z_I\neq\emptyset.$
In order to use some symmetry arguments, let {\boldmath$A^1$} $=X_I,$ {\boldmath$A^2$} $=Y_I$ and {\boldmath$A^3$} $=Z_I$. Then, by $z\in (X_I\cup Z_I)-Y_I$, $v\in (Y_I\cup Z_I)-X_I,$ and the assumption that $(X_I \cup Y_I)-Z_I\neq \emptyset$, we obtain $(A^i\cup A^j)-A^{\ell}\not=\emptyset$ whenever $\{i,j,\ell\}=\{1,2,3\}$. 
%
Since $\X,\Y$ and $\Z$ are horrifying bisets whose wall is $\{p\}$, we have $d'(A^i)\le (2k+1)-k=k+1$ for $i \in \{1,2,3\}$.
%
%
%
\begin{Claim}\label{incl}
$A^{\ell}\subseteq A^i\cup A^j$ whenever  $\{i,j,\ell\}=\{1,2,3\}$. 
\end{Claim}
\begin{proof}
Suppose  that  $A^{\ell}-(A^i\cup A^j) \not=\emptyset$ for some $\{i,j,\ell\}=\{1,2,3\}$.
By $z\in X_I\cap Z_I\cap N_H(s), v\in Y_I\cap Z_I\cap N_H(s)$ and $u\in Y_I\cap X_I\cap N_H(s),$ we have $A^i\cap A^j\neq\emptyset$ and $\overline {d'}(A^{i}\cup A^j, A^{\ell})\geq |(A^{i}_I\cup A^j_I)\cap A^{\ell}_I \cap N_H(s)|\ge 2.$ As $A^i\cap A^j\neq\emptyset$ and $(A^i\cup A^j)-A^{\ell} \neq \emptyset$, it follows that $d'(A^i\cap A^j), d'((A^i\cup A^j)-A^{\ell}) \geq k$. 
Then, as $d'(A^{i}),d'(A^j),d'(A^{\ell})\leq k+1$, Proposition \ref{ftyig}$(a)$ and $(b)$ yield
\begin{flalign*}
3(k+1)-k&\ge d'(A^i)+d'(A^j)-d'(A^i\cap A^j)+d'(A^{\ell})\\
&\ge d'(A^i\cup A^j)+d'(A^{\ell})\\
&=d'((A^i\cup A^j)-A^{\ell})+d'(A^{\ell}-(A^i\cup A^j))+2\overline {d'}(A^{i}\cup A^j, A^{\ell})\\
&\ge 2k+4,
\end{flalign*}
 a contradiction.
\qed \end{proof}
%
%
%
\begin{Claim}\label{sort}
For any $i\neq j\in \{1,2,3\}$, we have $\overline {d'}(A^i,A^j)\le 1.$
\end{Claim}
\begin{proof}
Let $\ell$ be the remaining element in $\{1,2,3\}-\{i,j\}$. By assumption, $(A^i\cup A^{\ell})-A^j\neq \emptyset$. By Claim \ref{incl}, $A^{\ell}-(A^i\cup A^j)=\emptyset$.
This yields $A^i-A^j=((A^i\cup A^{\ell})-A^j)-(A^{\ell}-(A^i\cup A^j))\neq\emptyset.$ Similarly, $A^j-A^i\neq\emptyset.$ It follows that $d'(A^i-A^j), d'(A^j-A^i)\geq k$. As $d'(A_i),d'(A_j)\leq k+1$, Proposition \ref{ftyig}$(b)$ yields
$2\overline {d'}(A^i,A^j)=d'(A^i)+d'(A^j)-d'(A^i-A^j)-d'(A^j-A^i)\le 2(k+1)-2k=2.$
\qed \end{proof}
%
%
%
\begin{Claim}\label{notv}
$d'(X_I\cup Y_I)\ge k+2.$
\end{Claim}
\begin{proof}
As $(H,c)$ has a complete admissible splitting off, there exist $x,y \in N_H(s)$ such that $(H'',c''):=((H,c)_{z,x}^1)_{v,y}^1$ is $(2,k)$-connected in $V$. Since $z \in X_I \cap Z_I, v \in Y_I \cap Z_I$ and $\X,\Y,\Z$ are horrifying, we obtain $x\notin X_I\cup Z_I$ and $y\notin Y_I\cup Z_I.$ Then, by Claim \ref{incl}, $x,y\notin X_I\cup Y_I.$ If $x=p=y$, then $f_{(H'',c'')}(\Z)=f(\Z)-2$, so, since $(H'',c'')$ is $(2,k)$-connected in $V$  and  $\Z$ is horrifying, we have $2k\le f_{(H'',c'')}(Z)=f(\Z)-2\le (2k+1)-2=2k-1,$ a contradiction. So one of $x$ and $y$ belongs to $V-(\X\cup\Y)_O$ and hence $\X \cup \Y$ is a nontrivial biset with respect to $V$. 
Then, since $\X\cup\Y$ is not horrifying, we have $d'(X_I\cup Y_I)=f(\X\cup\Y)-|w(\X\cup\Y)|k\ge (2k+2)-k=k+2.$
\qed \end{proof}
By Claim \ref{incl}, we have $X_I \cup Y_I=X_I \cup Y_I \cup Z_I$ and every edge that contributes to $d'(X_I\cup Y_I\cup Z_I)$ also contributes to $\overline{d'}(A^i,A^j)$ for some $i \neq j \in \{1,2,3\}$. By $k\ge 2,$  and Claims \ref{notv} and \ref{sort}, we obtain $4\le k+2\le d'(X_I\cup Y_I)=d'(X_I\cup Y_I\cup Z_I)\le \sum_{i\neq j}\overline {d'}(A^i,A^j)\le 3,$ a contradiction. This finishes the case. \qed

\begin{case}
$w(\Z)\neq \{p\}$.
\end{case}
By symmetry, we may suppose that $v\in Z_I$.
As $z\in X_I, v\in N_H(s)-X_I,\Z$ blocks $(sz,sv)$ and $w(\Z)\neq \{p\}$, we may apply Lemma \ref{new34} to obtain that $\X \cup \Z$ is horrifying. Since $z,v \in (\X \cup \Z)_I$, $\X \cup \Z$ blocks $(sz,sv)$. As $v \in Y_I$, $z\in N_H(s)-Y_I$, we may apply Lemma \ref{new34} once more and obtain that either $\X \cup \Y \cup \Z$ is horrifying or $w(\X \cup \Z)=w(\Y)$. In the first case we are done, so we may suppose that $w(\X\cup \Z)=w(\Y)=\{p\}$. If $Y_I-(X_I\cup Z_I)=\emptyset$, we obtain that $\X \cup \Y \cup \Z=\X\cup \Z$ is horrifying. 

We may therefore suppose that $Y_I-(X_I\cup Z_I)\neq\emptyset$. As $z \in (X_I \cup Z_I)-Y_I$, we obtain that $d'(Y_I-(X_I\cup Z_I))\geq k$ and $d'((X_I \cup Z_I)-Y_I)\geq k.$ Also, as $\X \cup \Z$ and $\Y$ are horrifying bisets whose wall is $\{p\}$, we obtain $d'(X_I \cup Z_I)\leq k+1$ and $d'(Y_I)\leq k+1$. As $u,v \in (X_I \cup Z_I)\cap Y_I \cap N_H(s)$, Proposition \ref{ftyig} $(b)$ yields  $2(k+1)\ge d'(X_I\cup Z_I)+d'(Y_I)=d'((X_I\cup Z_I)-Y_I)+d'(Y_I-(X_I \cup Z_I))+2\overline {d'}(X_I\cup Z_I,Y_I)\ge 2k+4,$ a contradiction. 
\qed \end{proof}

\subsection{Decription of the algorithm} \label{soff}We are now ready to describe the algorithm in the form of a pseudocode. It first is initialized with the input capacitated graph and an empty biset ${\sf X}$. The main part of the algorithm consists of  a while-loop in which maximal splitting offs are executed and ${\sf X}$ is modified. In order to apply the structure found in Lemma \ref{vide}, we need $\X$ to be horrifying. Therefore, the first part of the while-loop in lines 3 to 7 is concerned with reinitializing $\X$ with a horrifying biset if $\X$ is not horrifying before the iteration. The main part from line 9 to 24 deals with the case when $\X$ is horrifying. Algorithm \ref{algo} then performs up to two maximal splitting offs of pairs of edges incident to $s$ whose choice depends on $\X$. If none of these two maximal splitting offs leads to the deletion of an edge incident to $s$, Algorithm \ref{algo} augments $\X$ in a beneficial way. After the last iteration of the while-loop, Algorithm \ref{algo} outputs the obtained capacitated graph after deleting $s$.

\begin{algorithm}[H]
\caption {Complete admissible splitting off}
\begin{algorithm2e}[H]\label{algo}
\SetKwInOut{Input}{Input} 
\Input {A minimal even extension for $(2,k)$-connectivity $(H=(V+s,E),c)$ of a capacitated graph $(G,c_0).$}
\SetKwInOut{Output}{Output} 
\Output {A minimum $(2,k)$-connected augmentation of $(G,c_0).$}

${\sf X} := (\emptyset,\emptyset)$\;
\While{$|N_H(s)|\geq 2$}
	{
		\If{$\X$ is not horrifying}
			{	let $u\neq v \in N_H(s)$\;
				$(H,c)=(H,c)^{max}_{u,v}$\;
				\If{$c(su),c(sv)>0$}
					{
					let {\sf X} be a biset blocking $(su,sv)$\;
					}
			}
			\Else
				{	let $u \in X_I \cap N_H(s)$\;
					let $v \in N_H(s)-X_O$\;
					$(H,c)=(H,c)^{max}_{u,v}$\;
					\If{$c(su),c(sv)>0$}
						{let ${\sf Y}$ be a biset blocking $(su,sv)$\;
						\If{$\X \cup \Y$ is horrifying}	
							{$\X=\X \cup \Y$\;}		
						\Else
							{\If {$X_I \cap N_H(s)\subseteq Y_I$}
								{$\X = \Y$\;}
							\Else
								{let $z \in (X_I-Y_I)\cap N_H(s)$\;
								$(H,c)=(H,c)^{max}_{v,z}$\;
								\If {$c(sv),c(sz)>0$}
									{let $\Z$ be a biset blocking $(sv,sz)$\;
									$\X=\X \cup \Y \cup \Z$\;
									}
								}
							}		
						}
				}
	}
\Return $(H,c)-s$\;
\end{algorithm2e}
\end{algorithm}

\subsection{Analysis of the algorithm}

This last section is dedicated to the analysis of Algorithm \ref{algo}. We first give a collection of properties of the capacitated graphs and bisets obtained at intermediate steps of Algorithm \ref{algo}. We then conclude the correctness and the running time of Algorithm \ref{algo}.

\begin{Proposition}\label{ana}
The following hold for every iteration $i$ of the while loop starting in line 2:
\begin{enumerate}[(a)]
\item All steps in iteration $i$ are well-defined.
\item $(H,c)$ is $(2,k)$-connected in $V$ and has a complete admissible splitting off after iteration $i$.
\item $f(\X)\leq 2k+1,|w(\X)|\leq 1$ and $X_O \neq V$ after iteration $i$.
\end{enumerate}
\end{Proposition}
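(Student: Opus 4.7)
The proof proceeds by induction on $i$. The base case $i=0$, corresponding to the state just before the first iteration, is immediate: $\X=(\emptyset,\emptyset)$ trivially satisfies (c), and by the input specification $(H,c)$ is a minimal even extension for $(2,k)$-connectivity, so Proposition \ref{minevext}$(a),(b)$ give $(2,k)$-connectivity in $V$ and even $c(\delta_H(s))$; applying Lemma \ref{antiobstacle} to the empty splitting-off sequence together with Theorem \ref{splitcomplete} then yields the existence of a complete admissible splitting off, proving (b).

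For the inductive step, (a) is verified branch by branch: line 4 uses the while-loop condition; the maximal splitting offs in lines 5, 11, 20 and the blocking-biset extractions in lines 7, 13, 22 all invoke Lemma \ref{bigsplit}; for line 9 the key observation is that any horrifying biset must satisfy $X_I\cap N_H(s)\ne\emptyset$, directly from the definition of blocking applied to a witness pair $(su',sv')$ with $u'\ne v'$; line 10 follows from Lemma \ref{dehors}; and the choice of $z$ in line 19 is exactly the negation of the condition tested in line 16. For (b), each maximal splitting off is by construction a sequence of admissible splitting offs, so it preserves both $(2,k)$-connectivity in $V$ and the parity of $c(\delta_H(s))$; since the entire sequence of splitting offs from the initial extension up to the end of iteration $i$ consists of admissible splitting offs, Lemma \ref{antiobstacle} precludes any obstacle and Theorem \ref{splitcomplete} then yields the complete admissible splitting off.

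For (c), the conditions $|w(\X)|\le 1$ and $X_O\ne V$ are intrinsic to $\X$, and a short case analysis on the positions of the split-off endpoints relative to $\X$ shows that $f(\X)$ can only decrease or stay equal under a single splitting off. So if $\X$ is not modified during iteration $i$, the induction hypothesis transfers. If $\X$ is overwritten with a blocking biset (lines 7 or 17) or with $\X\cup\Y$ already verified to be horrifying (line 14), the new $\X$ is horrifying by construction and (c) follows immediately from the definition of horrifying, which enforces $|w(\X)|\le 1$, $f(\X)\le 2k+1$, and nontriviality (so $X_O\ne V$).

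The only delicate case is line 23. I would apply Lemma \ref{vide} in the graph $(H,c)$ that is current at that moment, namely the one obtained just after the splitting off in line 20. In that graph I need to verify that $\X$ is still horrifying (with $u$ the chosen element of $X_I\cap N_H(s)$), that $v\in N_H(s)-X_O$ and $\Y$ still blocks $(su,sv)$, that $z\in (X_I-Y_I)\cap N_H(s)$, and that $\X\cup\Y$ is still not horrifying. The main obstacle is propagating the horrifying/blocking status of $\X$ and $\Y$ across the admissible splitting offs of lines 11 and 20; I would handle this by tracking, case by case, how the contributions to $f(\X)$ and $f(\Y)$ change, using the specific positions of the split-off endpoints with respect to the bisets ($u\in X_I$, $v\notin X_O$ in line 11 and $v\notin X_O$, $z\in X_I-Y_I$ in line 20, which make the change in $f(\X)$ zero in both steps). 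Once these hypotheses are established, Lemma \ref{vide} directly gives that $\X\cup\Y\cup\Z$ is horrifying, from which (c) follows.
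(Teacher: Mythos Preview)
Your argument follows the same inductive, branch-by-branch scheme as the paper's and invokes the same auxiliary results at the same points; for (a), (b), and all but the final branch of (c) it is essentially identical to the paper's proof (modulo your citing Lemma~\ref{bigsplit} rather than Lemma~\ref{block} for the existence of blocking bisets, which is harmless). Where you go further is the branch in which $\X$ is replaced by $\X\cup\Y\cup\Z$: the paper simply lists the hypotheses of Lemma~\ref{vide} and applies it, whereas you correctly flag that those hypotheses were established at different moments (before and after the two maximal splittings) and must be verified in a single common graph. Your proposed fix---tracking the effect of the splittings on $f(\X)$ and $f(\Y)$---is the right idea and does most of the work: $f(\X)$ is indeed preserved by both splittings (one endpoint in $X_I$, the other outside $X_O$ each time), and once one knows $w(\X)=w(\Y)=\{p\}$ via Lemma~\ref{new34} in the post-line-11 graph, one also gets $u,v\in Y_I$, hence $u\ne z$, so $u$ survives as a neighbour of $s$ after line~21 and $f(\Y)$ is likewise preserved by that second splitting (since $v\in Y_I$ and $z\notin Y_O$).

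The one hypothesis your sketch does not actually cover is ``$\X\cup\Y$ is not horrifying'' in the graph after line~21. There both split endpoints $v$ and $z$ lie in $(X\cup Y)_I$, so $f(\X\cup\Y)$ drops by twice the splitting multiplicity and may well fall to $2k$ or $2k+1$; tracking $f(\X)$ and $f(\Y)$ separately does not control $f(\X\cup\Y)$. The paper's proof has exactly the same lacuna---it never says in which graph Lemma~\ref{vide} is being applied or why this particular hypothesis persists---so your proposal is at least as rigorous as the paper's, but strictly speaking an additional short argument is still needed to dispose of the possibility that $\X\cup\Y$ has become horrifying after the second maximal splitting.
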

\begin{proof}
By Lemma \ref{antiobstacle}, $(b)$ holds before iteration $1$ and trivially $(c)$ also holds. Inductively, we may suppose that $(a),(b)$ and $(c)$ hold for all iterations $1,\ldots,i-1$. We show that they also hold for iteration $i$:

$(a)$: The choice of $u$ and $v$ in line 4 is justified by the fact that the while-condition in line 2 was satisfied. If $\X$ is horrifying, the choice of $u$ in line 9 is justified and Lemma \ref{dehors} justifies the choice of $v$ in line 10. The choice of $z$ in line 20 is justified by the fact that the if-condition in line 17 was not satisfied. The horrifying bisets in lines 7,13 and 23 exist by Lemma \ref{block}.

$(b)$: It follows immediately from the construction that $c(\delta_H(s))$ always remains even and $(H,c)$ always remains $(2,k)$-connected in $V$. By Lemma \ref{antiobstacle}, no obstacle in $(H,c)$ can ever be created. Now Theorem \ref{splitcomplete} yields that $(H,c)$ has a complete admissible splitting off after iteration $i$.

$(c)$: As splitting offs do not increase $f$, it suffices to prove that $\X$ either remains unchanged or is horrifying after iteration $i$. First suppose that the if-condition in line 3 is satisfied. If the if-condition in line 6 is not satisfied, $\X$ remains unchanged. Otherwise, $\X$ is replaced by a horrifying biset in line 7.

Now suppose that the else-case starting in line 8 is executed. If the if-condition in line 12 is not satisfied, $\X$ remains unchanged, so suppose otherwise. If the if-condition in line 14 is satisfied, $\X$ is replaced by a horrifying biset in line 15, so suppose otherwise. If the if-condition in line 17 is satisfied, $\X$ is replaced by a horrifying biset in line 18. So suppose that the else-case starting in line 19 is executed. If the if-condition in line 22 is not satisfied, $\X$ remains unchanged. Otherwise, $u \in X_I\cap N_H(s), v \in N_H(s)-X_O, \Y$ blocks $(su,sv), z \in (X_I-Y_I)\cap N_H(s), \Z$ blocks $(sv,sz)$ and $\X \cup \Y$ is not horrifying. Together with $(b)$, Lemma \ref{vide} yields that $\X \cup \Y \cup \Z$ is horrifying, so $\X$ is replaced by a horrifying biset in line 24.  
\qed \end{proof}

We now obtain the correctness of our algorithm as a simple corollary:
\begin{Theorem}
If Algorithm \ref{algo} terminates,  it outputs a minimum $(2,k)$-connected augmentation of $(G,c_0)$.
\end{Theorem}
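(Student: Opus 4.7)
The plan is to combine the invariant of Proposition \ref{ana}(b) with Theorem \ref{twice}. I would first observe that every modification of $(H, c)$ performed by Algorithm \ref{algo} is a maximal splitting off (possibly of multiplicity zero), which by its definition decomposes into a series of admissible single splitting offs. Hence the overall sequence of operations applied to the input minimal even extension $(H_0, c_0)$ between initialization and termination forms a series of admissible splitting offs at $s$. By Proposition \ref{ana}(b), after the last iteration $(H,c)$ is $(2,k)$-connected in $V$ and admits a complete admissible splitting off at $s$. Moreover, since $c(\delta_{H_0}(s))$ is even by Proposition \ref{minevext}(b) and every splitting off preserves the parity of $c(\delta_H(s))$, the degree of $s$ remains even throughout the algorithm.

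The termination condition of the while loop is $|N_H(s)| \leq 1$. If $N_H(s) = \emptyset$, then $s$ is already isolated and the algorithm's execution is itself a complete admissible splitting off of $(H_0, c_0)$. The one delicate case, which is the main (if small) obstacle, is $N_H(s) = \{u\}$ with $c(su)\geq 2$ even. Here I would apply Proposition \ref{ana}(b) a second time: since $(H,c)$ still admits a complete admissible splitting off at $s$ and the only available continuation is by repeated splittings of the pair $(su, su)$, these self-splittings must all be admissible. Iterating them until $c(su) = 0$ extends the algorithm's sequence of operations into a complete admissible splitting off of $(H_0, c_0)$, and since each self-splitting only removes two copies of $su$ and creates a loop at $u$ that is immediately deleted, the graph eventually obtained once $s$ becomes isolated coincides exactly with $(H, c) - s$.

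In either case, $(H, c) - s$ is the graph produced from the minimal even extension $(H_0, c_0)$ by a complete admissible splitting off at $s$, so Theorem \ref{twice} yields directly that $(H, c) - s$ is a minimum $(2,k)$-connected augmentation of $(G, c_0)$. The trivial boundary case where no iteration of the while loop is ever executed, so that $(H,c)=(H_0,c_0)$ at the return statement, is handled identically by invoking Lemma \ref{antiobstacle} and Theorem \ref{splitcomplete} applied to $(H_0,c_0)$ in place of Proposition \ref{ana}(b) to obtain the existence of a complete admissible splitting off.
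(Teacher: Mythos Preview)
Your proof is correct and reaches the same conclusion via Theorem \ref{twice}, but you handle the case $|N_H(s)|=1$ differently from the paper. The paper shows this case cannot occur: by Proposition \ref{minevext}$(d)$, the self-splitting $(su,su)$ is not admissible in the input extension, hence not admissible in $(H,c)$ either (since admissible splitting offs never increase $f$); this contradicts Proposition \ref{ana}$(b)$, so necessarily $N_H(s)=\emptyset$ at termination. You instead argue that even if $|N_H(s)|=1$, the existence of a complete admissible splitting off forces the remaining self-splittings to be admissible, and since those self-splittings leave $(H,c)-s$ unchanged, the output is still correct. Your route avoids invoking Proposition \ref{minevext}$(d)$ and is perfectly valid (the case you treat is in fact vacuous), while the paper's route yields the slightly sharper structural fact that $s$ is already isolated when the loop exits.
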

\begin{proof}
By Theorem \ref{splitcomplete}, it is sufficient to prove that Algorithm \ref{algo} executes a complete admissible splitting off of the input capacitated graph.
Let $(H,c)$ be the current capacitated graph after the last iteration of the while-loop. By construction, $|N_H(s)|\leq 1$. If $N_H(s)$ contains a single vertex $u$, by Proposition \ref{minevext} $(d)$, $(su,su)$ is not admisible in the input capacitated graph. As $(H,c)$ has been obtained by admissible splitting offs, $(su,su)$ is neither admissible in $(H,c)$. It follows that $(H,c)$ does not have a complete admissible splitting, a contradiction to Proposition \ref{ana} $(b)$. Hence $s$ is an isolated vertex in $(H,c)$ and so, by  Proposition \ref{ana} $(b)$, Algorithm \ref{algo} executed a complete admissible splitting off.
\qed
\end{proof}

The remaining part is concerned with the running time analysis of Algorithm \ref{algo}.

\begin{Theorem}
Algorithm \ref{algo} runs in $O(n^3(m+n \log n))$ time.
\end{Theorem}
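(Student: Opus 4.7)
The plan is to combine Lemma \ref{bigsplit}, which bounds the cost of one maximal splitting off (together with the extraction of a blocking biset) by $O(n^2(m+n\log n))$, with a proof that the main while-loop of Algorithm \ref{algo} is executed only $O(n)$ times. All remaining operations inside a single iteration---picking $u$, $v$, $z$, checking whether a given biset is horrifying, and testing the inclusion $X_I\cap N_H(s)\subseteq Y_I$---run in $O(m+n)$ time by direct inspection of the definitions, so each iteration costs $O(n^2(m+n\log n))$ and the target bound $O(n^3(m+n\log n))$ follows.

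To bound the number of iterations, I would classify each one as: (a) an iteration in which some capacity $c(su)$ or $c(sv)$ is reduced to $0$, deleting an edge incident to $s$; (b) an iteration in which $|N_H(s)\cap X_O|$ strictly increases; (c) an iteration doing neither. There are at most $n$ iterations of type (a) since $s$ has at most $n$ neighbours. To bound type (b) by $n$, I need the monotonicity that $|N_H(s)\cap X_O|$ never decreases across an iteration. This is immediate for lines 7, 15 and 24, where $\X$ is either created from $(\emptyset,\emptyset)$ with $u,v\in X_O$ or replaced by a union containing the previous $\X$; in the line 18 case, $X_I\cap N_H(s)\subseteq Y_I$ together with $v\in Y_I-X_O$ and $|w(\X)|\le 1$ will force $|Y_O\cap N_H(s)|\ge |X_O\cap N_H(s)|$ by a direct count.

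The main obstacle will be bounding the number of type (c) iterations, and my plan is to prove that two consecutive type (c) iterations are impossible. A type (c) iteration must execute line 18, and Lemma \ref{new34} then guarantees that $\X$ and $\Y$ share a common wall of size $1$, say $\{p\}$. Using $X_I\cap N_H(s)\subseteq Y_I$ and $v\in (Y_I-X_I)\cap N_H(s)$, I obtain $|Y_I\cap N_H(s)|\ge |X_I\cap N_H(s)|+1$; combining this with the defining equality $|Y_O\cap N_H(s)|=|X_O\cap N_H(s)|$ of type (c) and with $|w(\X)|,|w(\Y)|\le 1$, a brief count yields both $p\in N_H(s)$ and $w(\Y)\cap N_H(s)=\emptyset$. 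Applied at the start of the following iteration, the same count shows that a further type (c) would require $w(\X)\cap N_H(s)\neq\emptyset$ for the current $\X=\Y$, contradicting $w(\Y)\cap N_H(s)=\emptyset$. Hence type (c) iterations are nonadjacent, so their number is at most $1+$(number of type (a) and (b) iterations)$\le 2n+1$, giving $O(n)$ iterations in total and completing the running-time analysis.
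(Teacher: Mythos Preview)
Your overall plan---bound each iteration by $O(n^2(m+n\log n))$ via Lemma \ref{bigsplit} and then show $O(n)$ iterations---is the right one, but the combinatorial bookkeeping has two genuine gaps.

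\textbf{Monotonicity fails.} The quantity $|N_H(s)\cap X_O|$ can strictly decrease in a type~(a) iteration. In the else-branch (line~8) you choose $u\in X_I$; if the maximal splitting off in line~11 drives $c(su)$ to~$0$, then $u$ leaves $N_H(s)$ while $\X$ is left untouched, so $|N_H(s)\cap X_O|$ drops. Your argument for ``at most $n$ type~(b) iterations'' relied on this monotonicity and therefore collapses. (One can repair this with a credit argument: each of the $\le n$ type~(a) iterations pays for at most two units of decrease, so type~(b) is bounded by $3n$; but that argument is not the one you wrote.)

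\textbf{The type~(c) analysis is misdirected.} You assert that a type~(c) iteration must execute line~18, but your own ``brief count'' actually proves the opposite. Since $\X\cup\Y$ is not horrifying in line~16, Lemma~\ref{new34} gives $w(\X)=w(\Y)=\{p\}$, hence $|w(\X)\cap N_H(s)|=|w(\Y)\cap N_H(s)|$; together with $|Y_I\cap N_H(s)|\ge|X_I\cap N_H(s)|+1$ this forces $|Y_O\cap N_H(s)|>|X_O\cap N_H(s)|$, so line~18 is \emph{always} type~(b). The only genuine source of type~(c) is line~7, in an iteration where $\X$ is non-horrifying with $|X_O\cap N_H(s)|=2$ already (which can occur after a type~(a) iteration destroys the horrifying property). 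You explicitly excluded line~7 by claiming $\X$ is ``created from $(\emptyset,\emptyset)$'' there, which is only true in the first iteration.

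The paper sidesteps both issues by tracking the single potential $M=|N_H(s)|+|N_H(s)-X_O|$: it never increases, it strictly decreases in every else-branch iteration (Claim~\ref{dytuh}), and in a line~3 iteration it either decreases or is followed by an else-branch iteration (Claim~\ref{dcuiq}); since $0\le M\le 2n$, this gives at most $4n$ iterations. Recasting your argument with this potential, rather than $|N_H(s)\cap X_O|$ alone, would fix both gaps.
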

\begin{proof}
Obviously the initialization of Algorithm \ref{algo} and the final output can be executed efficiently. Also, it follows from Lemma \ref{bigsplit} and the fact that we can check in $O(m)$ time whether a given biset is horrifying that every iteration of the while-loop starting in line 2 can be executed in $O(n^2(m+n \log n))$ time. It remains to show that the while-loop runs a linear number of times. In order to do this, we define the parameter {\boldmath$M$} $=|N_H(s)|+|N_H(s)-X_O|$. The decrease of $M$ measures the progress of our algorithm. We next prove two claims that show that $M$ decreases regularly.

\begin{Claim}\label{dytuh}
If in an iteration $i$ the else-case starting in line 8 is executed, then $M$ decreases in iteration $i$.
\end{Claim}
\begin{proof}
If the if-condition in line 12 is not satisfied, $|N_H(s)|$ decreases in iteration $i$ and $\X$ remains unchanged, so suppose otherwise.  If the if-condition in line 14 is satisfied, then in line 15 $\X$ is replaced by a biset containing $\X$ and $v$ leaves $N_H(s)-X_O$, so $|N_H(s)-X_O|$ decreases and $N_H(s)$ remains unchanged. Otherwise, by Lemma \ref{new34}, we have $w(\X)=w(\Y)$. Therefore, if the if-condition in line 17 is satisfied, $\X$ is replaced by $\Y$ in line 18 and $\Y$ satisfies $X_O\cup\{v\}\subseteq Y_O$, so $|N_H(s)-X_O|$ decreases and $N_H(s)$ remains unchanged. So suppose that the else-case starting in line 19 is executed. If the if-condition in line 22 is not satisfied, $|N_H(s)|$ decreases in iteration $i$ and $\X$ remains unchanged, so suppose otherwise. Then in line 23 $\X$ is replaced by a biset containing $\X$ and $v$ leaves $N_H(s)-X_O$, so $|N_H(s)-X_O|$ decreases and $N_H(s)$ remains unchanged.
\qed
\end{proof}
\begin{Claim}\label{dcuiq}
If in an iteration $i$ the if-condition in line 3 is satisfied, then either $M$ decreases in iteration $i$ or $M$ remains unchanged in iteration $i$ and decreases in iteration $i+1$.
\end{Claim}
\begin{proof}
If the if-condition in line 6 is not satisfied, $|N_H(s)|$ decreases in iteration $i$ and $\X$ remains unchanged, so suppose otherwise. By Proposition \ref{ana}$(c)$, $|X_O\cap N_H(s)|\leq 2$ before iteration $i$. As $\X$ is replaced by a horrifying biset in line 7, we have $|X_O\cap N_H(s)|\geq 2$ after iteration $i$. As $N_H(s)$ remains unchanged, $M$ does not increase in iteration $i$. As $\X$ is horrifying after iteration $i$, it follows that in iteration $i+1$ the else-case starting in line 8 is executed, so $M$ decreases in iteration $i+1$ by Claim \ref{dytuh}. 
\qed\end{proof}
Claims \ref{dytuh} and \ref{dcuiq} show that $M$ never increases and decreases in at least one of two consecutive iterations. Further observe that $M$ is always an integer satisfying $0\leq M \leq 2n$. It follows that the while-loop runs at most $4n$ times. This finishes the proof. 
\qed\end{proof}

\section{Acknowledgement}

We thank the anonymous referees to kindly ask us to simplify our algorithm. This allowed us to find a new approach and to significantly simplify the algorithm and the proofs.


\end{document}